\documentclass[a4paper,11pt,fleqn]{amsart}

\usepackage{amssymb,amsmath,amsthm}
\usepackage{centernot}
\usepackage{graphicx}
\usepackage{color}
\usepackage{thmtools}
\usepackage{thm-restate}
\usepackage[shortlabels]{enumitem}
\usepackage{mathrsfs}
\usepackage{mathtools}
\usepackage{stmaryrd}
\usepackage[left=0.9in,right=0.9in,top=1in,bottom=0.9in]{geometry}

\usepackage{graphicx}
\usepackage{xypic}
\usepackage{capt-of}

\usepackage[utf8]{inputenc}
\usepackage[T1]{fontenc}

\newtheorem{theorem}{Theorem}[section]
\newtheorem{lemma}[theorem]{Lemma}
\newtheorem{proposition}[theorem]{Proposition}

\newtheorem{corollary}[theorem]{Corollary}

\theoremstyle{definition}
\newtheorem{definition}[theorem]{Definition}
\newtheorem{example}[theorem]{Example}

\theoremstyle{remark}
\newtheorem{remark}[theorem]{Remark}

\numberwithin{equation}{section}

\theoremstyle{plain}
\newtheorem{theoreml}{Theorem}[section]

\newcommand{\eps}{\varepsilon}

\newcommand{\N}{\mathbb{N}}
\newcommand{\R}{\mathbb{R}}

\newcommand{\EE}{\mathbb{E}}
\newcommand{\OO}{\mathbb{O}}

\newcommand{\aA}{\mathcal{A}}

\newcommand{\xX}{\mathcal{X}}

\DeclareMathOperator{\id}{id}

\newcommand{\ol}{\overline}

\newcommand{\rstr}{\restriction}

\newcommand{\sm}{\setminus}
\newcommand{\sub}{\subseteq}
\DeclareMathOperator{\supp}{supp}
\DeclareMathOperator{\spn}{span}

\newcommand{\ctblsub}[1]{\left[#1\right]^\omega}

\newcommand{\io}{\in\omega}

\newcommand{\wo}{{\wp(\omega)}}
\newcommand{\bo}{{\beta\omega}}

\newcommand{\cso}{\ctblsub{\omega}}

\newcommand{\elli}{{\ell_\infty}}

\title[Symmetric compactifications and separable quotients of spaces $C_p(X)$]{Symmetric compactifications of the integers and separable quotients of spaces $C_p(X)$}
\author[Z. Silber]{Zden\v{e}k Silber}
\address{Institute of Mathematics of the Czech Academy of Sciences, \v{Z}itn\'{a} 25, 115 67 Prague 1, Czech Republic}
\email{zdesil@seznam.cz}

\author[D. Sobota]{Damian Sobota}
\address{Kurt G\"{o}del Research Center, Department of Mathematics, University of Vienna, Kolingasse 14-16, 1090 Vienna, Austria.}
\email{ein.damian.sobota@gmail.com}
\urladdr{www.logic.univie.ac.at/~{}dsobota}

\thanks{The first author was supported by the project L100192501 funded by the ``Programme to support prospective human resources --- post Ph.D. candidates'' of the Czech Academy of Sciences and RVO 67985840. The second author was supported by the Austrian Science Fund (FWF), grants ESP 108-N and 10.55776/PAT4720925. The research presented in this paper was initiated during the first author's visit at the University of Vienna in October 2024, supported by the FWF grant ESP 108-N}

\begin{document}

\begin{abstract}
    A compactification of the discrete space $\omega$ of all integers is called \textit{symmetric} if it is the quotient space obtained by gluing together the remainders of two copies of some other compactification of $\omega$. This is a generalization of both a convergent sequence, which is in a way the minimal symmetric compactification of $\omega$, and the Arkhangel'ski\u{\i}--Bereznitski\u{\i}--Schachermayer space studied in $C_p$-theory, which is in a sense the maximal symmetric compactification of $\omega$. We investigate symmetric compactifications of $\omega$ and their relations to the Separable Quotient Problem for spaces $C_p(X)$ and to the existence of Josefson--Nissenzweig sequences of finitely supported Borel measures on spaces $X$, in particular with supports of bounded size. Further, we reduce the Separable Quotient Problem for spaces $C_p(K)$, $K$ compact, to the case when $K$ is a \textit{totally asymmetric} compactification of $\omega$. Our results shed some new light on the Grothendieck property of Banach spaces $C(K)$.
\end{abstract}


\subjclass[2020]{Primary: 46E10, 54C35, 54D35. Secondary: 46E15, 46E27, 54A20, 54B15.}

\keywords{separable quotients, spaces of continuous functions, pointwise topology, convergence of measures, Josefson--Nissenzweig theorem, compactifications, \v{C}ech--Stone compactification}

\maketitle

\section{Introduction}\label{sec:Intro}

The Separable Quotient Problem, asking whether every infinite-dimensional Banach space admits an infinite-dimensional separable quotient, belongs to the most important long-standing open questions in Banach space theory (see \cite{FKLPS,Muj97}). By the results of Rosenthal \cite{Rosenthal} and Lacey \cite{Lacey}, the problem was settled in affirmative in the class of Banach spaces $C(K)$ of continuous real-valued functions on compact spaces $K$ endowed with the supremum norm (see Section \ref{sec:Prelim} for all unexplained notation). However, if one considers some other classes of locally convex spaces of continuous functions and infinite-dimensional (not necessarily metrizable) separable quotients, then the problem may appear again open---this happens, e.g., in the case of the well-studied class of spaces $C_p(X)$ of continuous real-valued functions on infinite Tychonoff spaces $X$ endowed with the pointwise topology, see e.g. \cite{BKS18,BKS19,KKS26,KMSZ22,KSZ23,KS18,KS26} for details.

It turns out hard not only to decide whether a given space $C_p(X)$ admits a separable quotient, but also, in the case when it is known that it does, to describe any of such quotients. K\k{a}kol and Saxon \cite{KS17} proved that a space $X$ is not pseudocompact (that is, $X$ admits an unbounded continuous real-valued function) if and only if $C_p(X)$ has a quotient isomorphic to the space $\R^\omega$, reducing the problem to the class of pseudocompact spaces $X$. On the other hand, as showed by Banach, K\k{a}kol, and \'{S}liwa \cite{BKS18}, the space $C_p(\bo)$ admits a quotient isomorphic to the space $(\elli)_p$, the standard Banach space $\elli$ endowed with the topology inherited from $\R^\omega$. However, it seems that the most important result so far describing separable quotients of spaces $C_p(X)$ is related to the existence of the following type of sequences of signed real-valued Borel measures on spaces $X$.

\begin{definition}\label{def:JNseq}
    Let $X$ be a Tychonoff space. A normalized sequence $(\mu_n)_{n\io}$ of measures in the dual space $C_p(X)^*$ is a \textit{Josefson--Nissenzweig} sequence, in short a \textit{JN-sequence}, if it is weak* null, i.e. $\lim_{n\to\infty}\mu_n(f)=0$ for every $f\in C_p(X)$.
\end{definition}

The notion of JN-sequences is of course closely related to the classical Josefson--Nissenzweig theorem (\cite{Josef,Nissen}). Banach, K\k{a}kol, and \'{S}liwa \cite{BKS19} proved that a space $C_p(X)$ admits a quotient isomorphic to the space $(c_0)_p$, i.e. the Banach space $c_0$ endowed with the pointwise topology inherited from $\R^\omega$, if and only if $X$ carries a JN-sequence. Consequently, if $X$ contains a non-trivial convergent sequence (e.g. $X$ is compact metrizable), then $C_p(X)$ has a quotient isomorphic to $(c_0)_p$. On the other hand, it was also observed in \cite{BKS19} that the \v{C}ech--Stone compactification $\bo$ of the space $\omega$ of all natural numbers does not carry any JN-sequence and so $C_p(\bo)$ does not admit any quotients isomorphic to $(c_0)_p$. Further results and examples concerning the existence of JN-sequences on compact spaces $K$ were presented in \cite{KMSZ22,KSZ23,KSZ24,MS24,MSZ24}.


To investigate further the Separable Quotient Problem as well as possible forms of separable quotients of spaces $C_p(X)$, Banakh, K\k{a}kol, Kurka, and \'{S}liwa \cite{BKS18,KKS26,KS18} introduced and study a property of spaces $X$ displaying some form of self-similarity, called the Two-Disjoint-Copies Property (see Definition \ref{def:2dcp}). In their works, they exploited among others the following notion.

\begin{definition}\label{def:sqseq}
    Let $X$ be a Tychonoff space. A normalized sequence $(\mu_n)_{n\io}$ of measures in $C_p(X)^*$ is an \textit{(sq)-sequence} if
    \[C_p(X) = \overline{\bigcup_{m \in \omega} \bigcap_{n \geq m} \ker(\mu_n)}^{C_p(X)}.\]
\end{definition}

As observed in \cite{KS93,KS18}, a space $C_p(X)$ admits a separable quotient if and only if $X$ carries an (sq)-sequence. Consequently, (sq)-sequences constitute a central, though not always handy, concept for the study of separable quotients.  

One of the goals of this paper is to investigate when the presence of an (sq)-sequence $(\mu_n)_{n\io}$ on a compact space $K$ yields that $K$ actually carries a JN-sequence and so that $C_p(K)$ has a quotient isomorphic to $(c_0)_p$. Here, our main working assumption is that the given (sq)-sequence $(\mu_n)_{n\io}$ has uniformly bounded sizes of supports, that is, there is $M\io$ such that $|\supp(\mu_n)|\le M$ for all $n\io$. It appears that in this case $K$ also carries a JN-sequence with uniformly bounded sizes of supports and so, by the work \cite{MSZ24}, even a JN-sequence with supports of size $2$. Moreover, $K$ must then also contain a compactification $b\omega$ of $\omega$ which is symmetric in the following sense.

\begin{definition}\label{def:symmetric}
    A compactification $b\omega$ of $\omega$ is \textit{symmetric} if there exist a partition $(A,B)$ of $\omega$ into two infinite subsets $A,B$ and a homeomorphism $h\colon\overline{A}^{b\omega}\to\overline{B}^{b\omega}$ satisfying $h \restriction\big(\overline{A}^{b\omega}\setminus A\big) = \id_{\overline{A}^{b\omega}\setminus A}$.
\end{definition}

Intuitively and as both the definition and the name suggest, a symmetric compactification $b\omega$ is such a compactification which presents a lot of symmetry with respect to their remainder $b\omega\sm\omega$. Roughly speaking, the compactification $b\omega$ is symmetric if it is the quotient space obtained by gluing together the remainders of two copies of the same compactification $\gamma\omega$ of $\omega$ (cf. Proposition \ref{prop:symmetric_two_copies}). For various other characterizations of symmetric compactifications of $\omega$, see Theorem \ref{theorem:SymmetricCase}.

Symmetric compactifications of $\omega$ are quite ubiquitous. For example, the ordinal interval $[0,\omega]$, which could be identified with a convergent sequence together with its limit point, is the simplest instance of a symmetric compactification of $\omega$ (see Example \ref{ex:conv}). On the other hand, the space $K_{ABS}$, introduced and studied by Arkhangel’ski\u{\i}, Bereznitski\u{\i} \cite{Ber71}, and Schachermayer \cite{Sch82} in the context of $C_p(X)$-spaces and $C(K)$-spaces, can be thought of as the most complex symmetric compactification of $\omega$ (see Example \ref{ex:ABS}). Moreover, any separable compact space can be the remainder of a symmetric compactification (see Proposition \ref{prop:arbremain}).

Let us finally state the first main result of this paper (for its more general statement and the proof, see Theorem \ref{theorem:sq_jn_2_k}):

\begin{theoreml}\label{theorem:mainA}
    For a compact space $K$, the following statements are equivalent:
    \begin{enumerate}[(1)]
        \item $K$ carries an (sq)-sequence $(\mu_n)_{n\io}$ such that $|\supp(\mu_n)|\le M$ for all $n\io$ and some $M\ge2$.
        \item $K$ carries a JN-sequence $(\mu_n)_{n\io}$ such that $|\supp(\mu_n)|=2$ for all $n\io$.
        \item $K$ contains a copy of a symmetric compactification $b\omega$ of $\omega$.
    \end{enumerate}
\end{theoreml}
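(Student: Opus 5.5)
I would prove the cycle (2)$\Rightarrow$(1)$\Rightarrow$(3)$\Rightarrow$(2).

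\emph{(2)$\Rightarrow$(1).} This is routine. If $(\mu_n)$ is a JN-sequence, then for every $f\in C_p(K)$ the sequence $\mu_n(f)$ tends to $0$. I would like to conclude that $f$ lies in the closure of $\bigcup_m\bigcap_{n\ge m}\ker\mu_n$, but weak* nullness only gives $\mu_n(f)\to0$, not $\mu_n(f)=0$ eventually. So I would instead use the standard fact from \cite{KS18,BKS19} that a JN-sequence is an (sq)-sequence. A direct argument also works. For $\mu_n$ with finite support, the functions $g$ that agree with $f$ on a given finite set $F$ and satisfy $\mu_n(g)=0$ for all $n\ge m$ can be constructed. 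Take $m$ so large that the supports $\supp(\mu_n)$, $n\ge m$, avoid $F$; this is possible after passing to a subsequence with pairwise disjoint supports, which a JN-sequence with supports of size 2 admits by a standard thinning argument. Then correct $f$ on those supports by a Tietze-type uniformly convergent modification. Consequently a $2$-supported JN-sequence is an (sq)-sequence with $M=2$.

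\emph{(1)$\Rightarrow$(3).} This is the main obstacle. Write $\mu_n=\sum_{i<M}a^n_i\delta_{x^n_i}$. Pass to subsequences so that each coordinate sequence $(x^n_i)_n$ is either constant or injective, the coefficient vectors converge, and the supports are $\Delta$-system-like (common root $R$ plus pairwise disjoint tails). The (sq)-condition forces the coefficients on the root to vanish eventually. Otherwise every $f$ would be approximable by functions satisfying one fixed nontrivial linear relation on the finite set $R$, which is impossible.

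We are left with disjoint tails of size at most $M$ with normalized coefficients. I would argue that some pair of coordinates $i\ne j$ has the following property: for every $f\in C(K)$, $f(x^n_i)-f(x^n_j)\to0$ along a subsequence. This step uses the (sq)-property more delicately. If no cancellation occurred, one could build $f$, with $f$ large on a chosen tail, violating membership in the closure. This is exactly the content one expects from \cite{MSZ24}'s reduction of bounded-support JN-sequences to $2$-supported ones, adapted from weak* null to the (sq) setting.

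Given points $a_n=x^n_i$ and $b_n=x^n_j$ that are all distinct and satisfy $f(a_n)-f(b_n)\to0$ for all $f$, let $A=\{a_n\}$ and $B=\{b_n\}$. Then $\overline{A\cup B}$ is a compactification of $\omega$ if the points are isolated in the closure, which we get by passing to a discrete subsequence. The map $a_n\mapsto b_n$ extends to a homeomorphism $\overline A\to\overline B$ that is the identity on the remainders, because every $f$ agrees asymptotically on $a_n$ and $b_n$; so the two sets have the same cluster points and filters. This gives (3).

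\emph{(3)$\Rightarrow$(2).} Given a symmetric $b\omega\subseteq K$ with partition $(A,B)$ and homeomorphism $h$, enumerate $A=\{a_n\}$ and set $\mu_n=\tfrac12(\delta_{a_n}-\delta_{h(a_n)})$. Suppose some $f\in C(K)$ had $|f(a_n)-f(h(a_n))|\ge\varepsilon$ infinitely often. Then any cluster point $p$ of those $a_n$ lies in the remainder, and $h$ fixes $p$ with $h(a_n)\to p$ along the same net by continuity. Hence both values tend to $f(p)$, a contradiction. So $(\mu_n)$ is a JN-sequence with supports of size 2.
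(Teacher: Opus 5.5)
\paragraph{The gap: (1)$\Rightarrow$(3).}
Your root-elimination claim is false. On $[0,\omega]$ the sequence $\mu_n=\frac12(\delta_n-\delta_\omega)$ is an (sq)-sequence, because eventually constant functions are $\tau_p$-dense in $C_p([0,\omega])$. Yet its root $\{\omega\}$ keeps coefficient $-\frac12$ forever. Tail points converging to a root point cancel its mass in the limit, so no fixed linear relation on $R$ is produced.

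The next step, the existence of a cancelling pair of coordinates, is only asserted. Your heuristic of an $f$ ``large on a chosen tail'' cannot contradict (sq). Density of $\bigcup_m\bigcap_{n\ge m}\ker(\mu_n)$ in $C_p(K)$ can only fail through a nonzero \emph{finitely supported} functional vanishing on that set.

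That observation is exactly the missing idea. Since $|\supp(\mu_n)|\le M$, every weak$^*$ cluster point $\nu$ of $(\mu_n)$ lies in the weak$^*$ compact set $M^1_M(K)\subseteq M_f(K)$, so $\nu$ is $\tau_p$-continuous. It vanishes on the dense set $\bigcup_m\bigcap_{n\ge m}\ker(\mu_n)$, hence $\nu=0$. A sequence in a compact set whose only cluster point is $0$ converges to $0$ (Lemma~\ref{lemma:cluster_point}, Proposition~\ref{prop:sqk_jnk}).

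So a bounded-support (sq)-sequence is already a JN-sequence, and nothing has to be ``adapted to the (sq) setting.'' From there the paper applies \cite[Theorem 1.4]{MSZ24} and \cite[Lemma 6.6]{MSZ24}, passes to a discretely supported simple subsequence, and concludes with Theorem~\ref{theorem:SymmetricCase}.

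For compact $K$, your pairing step can even be done by hand once weak$^*$ nullity is known:
\begin{enumerate}
\item For every free ultrafilter $\mathcal U$, the limit measure $\sum_i c_i\delta_{\lim_{\mathcal U}x^n_i}$ is $0$, so some two coordinates $i\neq j$ have the same $\mathcal U$-limit.
\item Hence the finitely many closed sets $\{\mathcal U\colon \lim_{\mathcal U}x^n_i=\lim_{\mathcal U}x^n_j\}$, $i\ne j$, cover $\beta\omega\setminus\omega$.
\item So one of them has nonempty interior there, and therefore contains $\overline{S}^{\beta\omega}\setminus S$ for some $S\in[\omega]^\omega$. Along $S$ this pair of coordinates works for every $f$.
\end{enumerate}
If one of the two coordinates is constant, this yields a nontrivial convergent sequence, i.e.\ a copy of $[0,\omega]$. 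Your final step does not address that case.

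\paragraph{The other two implications.}
Your (3)$\Rightarrow$(2) is correct. It is more direct than the paper's chain (v)$\Rightarrow$(iii)$\Rightarrow$(ii)$\Rightarrow$(iv)$\Rightarrow$(i) in Theorem~\ref{theorem:SymmetricCase}, since continuity of $h$ at remainder points gives (iv) at once.

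In (2)$\Rightarrow$(1), the ``standard fact'' that JN-sequences are (sq)-sequences is false. On $[0,1]$, list the measures $\frac12(\delta_{j/k}-\delta_{(j+1)/k})$, $0\le j<k$, block by block in $k$. This is a JN-sequence in $M_2([0,1])$. But each full block sums to $\frac12(\delta_0-\delta_1)$, so every $f\in\bigcup_m\bigcap_{n\ge m}\ker(\mu_n)$ satisfies $f(0)=f(1)$, and the set is not dense.

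Your direct bump-function argument is sound, but it needs a disjointly \emph{and discretely} supported sequence. Thinning alone does not give disjointness: for $\frac12(\delta_n-\delta_\omega)$ every subsequence shares the point $\omega$. You need a replacement step such as \cite[Lemma 6.6]{MSZ24}, after which you are in the situation of Proposition~\ref{prop:simple_jn_sq}.
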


Combining Theorem \ref{theorem:mainA} with the aforementioned theorem of Banakh, K\k{a}kol, and \'{S}liwa \cite{BKS19}, we immediately get the following result describing quotients of spaces $C_p(K)$ carrying (sq)-sequences with uniformly bounded sizes of supports.

\begin{theoreml}\label{theorem:mainB}
    If a compact space $K$ carries an (sq)-sequence $(\mu_n)_{n\io}$ such that $|\supp(\mu_n)|\le M$ for all $n\io$ and some $M\io$, then $C_p(K)$ admits a quotient isomorphic to $(c_0)_p$.
\end{theoreml}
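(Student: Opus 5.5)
Theorem \ref{theorem:mainB} follows from Theorem \ref{theorem:mainA} combined with the Banakh--K\k{a}kol--\'{S}liwa characterization from \cite{BKS19}: $C_p(X)$ has a quotient isomorphic to $(c_0)_p$ if and only if $X$ carries a JN-sequence. The only preliminary issue is the range of $M$. Theorem \ref{theorem:mainA} requires $M\ge2$, while here $M\io$ is arbitrary.

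First, we reduce to $M\ge 2$. Since $(\mu_n)_{n\io}$ is normalized, each $\mu_n\neq0$, so $|\supp(\mu_n)|\ge1$ and hence $M\ge1$. If the sequence satisfies $|\supp(\mu_n)|\le M$ for some $M\le 1$, it also satisfies the bound with $M=2$, because the hypothesis is only an upper bound. So we may replace $M$ by $\max(M,2)$, and condition (1) of Theorem \ref{theorem:mainA} holds verbatim.

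Second, we apply the implication (1)$\Rightarrow$(2) of Theorem \ref{theorem:mainA}. It yields a JN-sequence $(\nu_n)_{n\io}$ on $K$ with $|\supp(\nu_n)|=2$ for all $n$. In particular, $K$ carries a JN-sequence in the sense of Definition \ref{def:JNseq}.

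Third, we invoke the theorem of \cite{BKS19} recalled in the introduction. Since $K$ carries a JN-sequence, $C_p(K)$ admits a quotient isomorphic to $(c_0)_p$.

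All of the substantive work lies in the implication (1)$\Rightarrow$(2) of Theorem \ref{theorem:mainA}, which we take as given. Given that, the only step needing any care is the trivial adjustment of the bound $M$ in the first step.
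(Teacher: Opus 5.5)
Your proposal is correct and is essentially the paper's own argument: the paper obtains this theorem immediately by combining the implication (1)$\Rightarrow$(2) of Theorem \ref{theorem:mainA} with the Banakh--K\k{a}kol--\'{S}liwa characterization of $(c_0)_p$-quotients via JN-sequences from \cite{BKS19}. Replacing $M$ by $\max(M,2)$ correctly handles the mismatch between the hypothesis ``$M\io$'' here and ``$M\ge2$'' in Theorem \ref{theorem:mainA}.
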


Note that not every compact space carrying a JN-sequence carries one with uniformly bounded sizes of supports---see \cite{MS24,MSZ24} for suitable examples. It follows that symmetric compactifications of $\omega$ do not fully characterize admitting $(c_0)_p$ as a quotient of a space $C_p(K)$.

The existence of a quotient of a given space $C_p(K)$ which is isomorphic to $(c_0)_p$ implies that the Banach space $C(K)$ is not \textit{Grothendieck}, that is, there is a normalized weak* null sequence in its dual space $C(K)^*$ which is not weakly null, or, equivalently, if there is a separable Banach space $Y$ and a non-weakly compact operator $T\colon C(K)\to Y$ (see \cite{KKLPS,KSZ23}). It is a long-standing open problem, posed by Diestel \cite{Die73}, to characterize those compact spaces $K$ for which their spaces $C(K)$ are Grothendieck only in terms of the topology of $K$. For examples of various sufficient criteria, related to completeness and separation properties of the Boolean algebras of clopen subsets of $K$, see \cite{KKLPS}. On the other hand, the paper \cite{MS24} presents numerous examples of \textit{forbidden structures} or, more precisely, such countable topological spaces $Y$ with one non-isolated point which have the following property: if $K$ contains a homeomorphic copy of $Y$, then $C(K)$ is not Grothendieck. A typical example of such a forbidden structure, and the only compact one, is the space $[0,\omega]$, which, as stated above, is also an example of the simplest symmetric compactification. Consequently, the present work supplements \cite{MS24} in providing numerous examples of \textit{compact} forbidden structures, implying the lack of the Grothendieck property of the respective spaces $C(K)$.

\begin{theoreml}\label{theorem:mainC}
    If a compact space $K$ contains a copy of a symmetric compactification of $\omega$, then the space $C(K)$ is not Grothendieck.
\end{theoreml}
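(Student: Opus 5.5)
Let $b\omega\subseteq K$ be a copy of a symmetric compactification, witnessed by a partition $(A,B)$ of $\omega$ and a homeomorphism $h\colon\overline{A}^{b\omega}\to\overline{B}^{b\omega}$ that is the identity on the remainder $\overline{A}^{b\omega}\setminus A$. The plan is to write down an explicit JN-sequence on $K$ that is not weakly null in $C(K)^*$. Enumerate $A=\{a_n\colon n\io\}$ without repetitions and set $b_n=h(a_n)$. Since $h$ is a bijection and fixes the remainder, it maps $A$ onto $B$, so $(b_n)$ is an enumeration of $B$ and $a_n\ne b_n$. Define
\[\mu_n=\tfrac12\big(\delta_{a_n}-\delta_{b_n}\big),\]
viewed as a measure on $K$. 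Each $\mu_n$ has norm $1$, and its support has size $2$.

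\textbf{Step 1 (weak* null).} Suppose $f\in C(K)$ but $f(a_n)-f(b_n)\not\to0$. Then there are $\eps>0$ and an infinite $N\subseteq\omega$ with $|f(a_n)-f(b_n)|\ge\eps$ for $n\in N$. Take a nonprincipal ultrafilter $\mathcal U$ on $N$ and let $x=\lim_{\mathcal U}a_n$ in the compact space $\overline{A}^{b\omega}$. Because the $a_n$ are distinct and isolated, $x$ lies in the remainder, so $h(x)=x$. By continuity of $h$ we get $\lim_{\mathcal U}b_n=\lim_{\mathcal U}h(a_n)=h(x)=x$. Continuity of $f$ then gives $\lim_{\mathcal U}\big(f(a_n)-f(b_n)\big)=f(x)-f(x)=0$, which is a contradiction. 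Hence $\mu_n(f)\to0$ for every $f\in C(K)$.

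\textbf{Step 2 (not weakly null).} The points of $\omega$ are isolated in $b\omega$ but need not be isolated in $K$, so I cannot test against $1_A$ directly. The way around this is to use only that $D=\{a_n,b_n\colon n\io\}$ is a discrete subset of $K$. Suppose $(\mu_n)$ were weakly null. Then, since weakly null sequences in $C(K)^*$ are in particular weakly null in the dual of $\ell_\infty(D)$, testing against the functional $\mu\mapsto\sum_{d\in A}\mu(\{d\})$ would be legitimate. This functional is bounded on $C(K)^*$ because $A$ is countable, so it is Borel-measurable data. It gives the value $\tfrac12$ on every $\mu_n$, which is a contradiction.

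\textbf{Cleaner form of Step 2.} Equivalently, all $\mu_n$ live in $\ell_1(D)\subseteq C(K)^*$, which is a closed subspace isometric to $\ell_1$. There, weak convergence coincides with the weak topology of $\ell_1$. By the Schur property the sequence would then be norm null, but $\|\mu_n\|=1$.

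Thus $(\mu_n)$ is weak* null but not weakly null, so $C(K)$ is not Grothendieck. I expect the main obstacle to be the weak* nullity in Step 1, specifically the use of the identity-on-remainder condition to force the two ultrafilter limits to coincide. Step 2 is routine once one notices the Schur property. Alternatively, the whole statement follows from Theorem \ref{theorem:mainA} together with \cite{BKS19} and \cite{KSZ23}.
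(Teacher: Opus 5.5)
Your proof is correct. It uses the same witnessing sequence $\mu_n=\frac12(\delta_{a_n}-\delta_{b_n})$ as the paper, but reaches the conclusion by a shorter, self-contained route.

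In the paper, Theorem \ref{theorem:mainC} is stated as a corollary. A copy of a symmetric compactification gives, via Theorems \ref{theorem:SymmetricCase} and \ref{theorem:sq_jn_2_k}, a JN-sequence in $M_2(K)$. By \cite{BKS19} this yields a quotient of $C_p(K)$ isomorphic to $(c_0)_p$, and the failure of the Grothendieck property is then quoted from \cite{KKLPS,KSZ23}. Moreover, inside Theorem \ref{theorem:SymmetricCase} the passage from the homeomorphism $h$ (item (v)) to weak* nullity (item (i)) runs around the cycle (v)$\Rightarrow$(iii)$\Rightarrow$(ii)$\Rightarrow$(iv)$\Rightarrow$(i). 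That cycle includes the neighbourhood construction in (v)$\Rightarrow$(iii) and the function construction in (iii)$\Rightarrow$(ii).

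You instead notice that (v)$\Rightarrow$(iv) is immediate: $h$ is continuous and fixes every remainder point, so $\lim_{\mathcal U} b_n=h(\lim_{\mathcal U} a_n)=\lim_{\mathcal U} a_n$. You then run the argument of (iv)$\Rightarrow$(i) with an ultrafilter limit in place of a net. For non-weak-nullity you replace the citations by the standard Schur argument: all $\mu_n$ lie in the isometric copy $\ell_1(D)$ of $\ell_1$ inside $C(K)^*$. This is precisely the mechanism behind the quoted implication ``JN-sequence $\Rightarrow$ not Grothendieck''.

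The paper's route yields more: the same sequence is an (sq)-sequence, and $C_p(K)$ has a $(c_0)_p$ quotient. None of that is needed for Theorem \ref{theorem:mainC}, and your argument isolates exactly what is.

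Two minor points on Step 2. First, your first version is already valid without the detour through $\ell_\infty(D)$, whose phrasing is muddled. The map $\mu\mapsto\mu(A)=\sum_{d\in A}\mu(\{d\})$ is a functional of norm at most $1$ on $M(K)=C(K)^*$. In other words, $1_A$ is a legitimate element of $C(K)^{**}$ even though it is not in $C(K)$, so your remark that you ``cannot test against $1_A$'' only applies to $1_A$ as a continuous function. Second, the discreteness of $D$ plays no role; only its countability is used.
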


Our last main result concerns compactifications which are as far from being symmetric as possible.

\begin{definition}\label{def:totasym}
    A compactification $b \omega$ of $\omega$ is \textit{totally asymmetric} if for every two disjoint infinite subsets $A,B \in\omega$ their closures $\overline{A}^{b\omega}$ and $\overline{B}^{b\omega}$ are not homeomorphic.
\end{definition}

For constructions of (consistent) examples of totally asymmetric compactifications, see \cite{Bac18,KS18}. Note that the notion is closely related to the aforementioned Two-Disjoint-Copies Property (see Proposition \ref{prop:KakSli2018}).

We show that the general Separable Quotient Problem for $C_p(K)$-spaces, $K$ compact, can be reduced to investigating whether spaces $C_p(b\omega)$ admit separable quotients for totally asymmetric compactifications $b\omega$ of $\omega$, improving \cite[Proposition 19]{KS18}. 

\begin{theoreml}\label{theorem:mainD}
    The following statements are equivalent:
    \begin{enumerate}[(i)]
        \item There is a compact space $K$ such that $C_p(K)$ does not have a separable quotient.
        \item There is a totally asymmetric compactification $b \omega$ of $\omega$ such that $C_p(b \omega)$ does not have a separable quotient.
    \end{enumerate}
\end{theoreml}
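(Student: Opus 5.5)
The direction (ii)$\Rightarrow$(i) is trivial, since $b\omega$ is itself compact. For (i)$\Rightarrow$(ii), I would fix a compact $K$ such that $C_p(K)$ has no separable quotient and reduce it in two steps: first to a compactification of $\omega$ sitting inside $K$, and then to a totally asymmetric one.

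\emph{Step 1: reduction to a compactification of $\omega$.} Since $C_p(K)$ has no separable quotient, $K$ carries no JN-sequence (a $(c_0)_p$ quotient is separable). In particular $K$ contains no non-trivial convergent sequence. Choose any countable discrete subset $D\subseteq K$, which exists since $K$ is infinite, and let $L=\overline{D}^K$; this is a compactification of $\omega\cong D$. I want to show that $C_p(L)$ also has no separable quotient, i.e. $L$ carries no (sq)-sequence. Suppose $(\mu_n)$ were an (sq)-sequence on $L$. Every measure on $L$ is a measure on $K$. The restriction map $C_p(K)\to C_p(L)$ is a continuous linear surjection (Tietze), so the preimage of the dense set $\bigcup_m\bigcap_{n\ge m}\ker\mu_n$ in $C_p(L)$ is dense in $C_p(K)$. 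This works because the restriction map is open onto $C_p(L)$, as quotient maps between $C_p$ spaces of closed subspaces are. The preimage is exactly $\bigcup_m\bigcap_{n\ge m}\ker\mu_n$ computed in $C_p(K)$, so $(\mu_n)$ would be an (sq)-sequence on $K$, a contradiction. Hence $C_p(L)$ has no separable quotient. In fact this argument shows that separable quotients lift from closed subspaces, so every closed subspace of $K$ inherits the property.

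\emph{Step 2: pass to a totally asymmetric compactification.} Starting from $L=b\omega$, I would look for an infinite $A\subseteq\omega$ such that $\overline{A}^{b\omega}$ is totally asymmetric. By Step 1, $C_p(\overline{A})$ has no separable quotient for every infinite $A$, and $\overline A$ is again a compactification of $A\cong\omega$. Suppose no such $A$ exists. Then in particular $b\omega$ itself contains disjoint infinite $A_0,B_0$ with $\overline{A_0}\cong\overline{B_0}$. The aim is to upgrade this to a symmetric compactification inside $K$, or to the Two-Disjoint-Copies Property, which by the results of Banakh, K\k{a}kol and \'Sliwa (cf. Proposition \ref{prop:KakSli2018} and \cite[Proposition 19]{KS18}) yields a separable quotient, again a contradiction.

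Concretely, given a homeomorphism $h\colon\overline{A_0}\to\overline{B_0}$, I would use the measures $\mu_n=\frac12(\delta_{x_n}-\delta_{h(x_n)})$ for an enumeration $x_n$ of $A_0$. Two cases arise:
\begin{enumerate}[(a)]
\item If $h$ is the identity on a large part of the remainder, I would aim to produce an (sq)-sequence with supports of size $2$, or a symmetric compactification inside $K$, and then invoke Theorem \ref{theorem:mainA}.
\item If not, I would use the disjoint copies $\overline{A_0}$ and $\overline{B_0}$ together with the self-similarity iterated over all subsets to obtain the Two-Disjoint-Copies Property.
\end{enumerate}

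The main obstacle is this final step: extracting a genuine separable quotient from a mere homeomorphism between the closures of two disjoint sets when $h$ does not fix the remainder. I would first try to quote \cite[Proposition 19]{KS18} directly, and strengthen it by a maximality or diagonal argument over infinite subsets $A$.
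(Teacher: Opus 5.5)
Your direction (ii)$\Rightarrow$(i) and your Step~1 are correct. Step~1 (openness of the restriction map $C_p(K)\to C_p(L)$ for closed $L$, so an (sq)-sequence on $L$ is an (sq)-sequence on $K$) even spells out a hereditary fact that the paper uses tacitly to conclude that $C_p(\overline{M})$ has no separable quotient.

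The genuine gap is Step~2. It is the whole content of the theorem, and you leave it as an unresolved obstacle. Several parts of your sketch would not work as stated:
\begin{itemize}
\item Your case (a) is not the right case. Theorem~\ref{theorem:SymmetricCase} needs $h$ to be the identity on the \emph{entire} remainder $\overline{A_0}\setminus A_0$. So ``$h$ is the identity on a large part of the remainder'' gives neither a JN-sequence nor a symmetric compactification.
\item In your case (b), the closures $\overline{A_0}$ and $\overline{B_0}$ need not be disjoint, since they may meet in the remainder. So they are not yet the two disjoint copies that 2DCP requires.
\item The failure of total asymmetry has to be applied to every infinite subset of the discrete set, not only to $b\omega$ itself.
\item Quoting \cite[Proposition 19]{KS18} cannot suffice, because the theorem is a strengthening of it.
\end{itemize}

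The missing idea is an exact dichotomy followed by localization at a single moved point. Fix a countable discrete $M_0\subseteq K$ and suppose no infinite $M\subseteq M_0$ has totally asymmetric closure. Given any infinite $M\subseteq M_0$, take disjoint infinite $A_0,B_0\subseteq M$ and a homeomorphism $h\colon\overline{A_0}\to\overline{B_0}$. It maps $A_0$ onto $B_0$, since these are the isolated points.

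If $h$ fixed $\overline{A_0}\setminus A_0$ pointwise, then $\overline{A_0\cup B_0}$ would be a symmetric compactification inside $K$. By Theorem~\ref{theorem:sq_jn_2_k}, $C_p(K)$ would then have a separable quotient, a contradiction. Hence $h(x)\neq x$ for some $x\in\overline{A_0}\setminus A_0$.

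Choose relatively open $U_0\ni x$ in $\overline{A_0}$ and $V_0\ni h(x)$ in $\overline{B_0}$ with $\overline{U_0}\cap\overline{V_0}=\emptyset$. Put $A=A_0\cap U_0\cap h^{-1}[V_0]$ and $B=h[A]\subseteq B_0\cap V_0$. Then:
\begin{itemize}
\item $A$ is infinite, because $x$ is a non-isolated point of $\overline{A_0}$ lying in the open set $U_0\cap h^{-1}[V_0]$;
\item $h\restriction\overline{A}$ is a homeomorphism onto $\overline{B}$;
\item $\overline{A}\cap\overline{B}\subseteq\overline{U_0}\cap\overline{V_0}=\emptyset$.
\end{itemize}

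This holds inside every infinite $M\subseteq M_0$, so the hypothesis of Proposition~\ref{prop:KakSli2018} is satisfied. Recursively, take $K_{n+1}=\overline{A}$ and continue inside $A$. Thus $K$ has 2DCP and $C_p(K)$ has a separable quotient, a contradiction. Hence some infinite $M\subseteq M_0$ has totally asymmetric closure, and by your Step~1, $C_p(\overline{M})$ has no separable quotient.
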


For the proof of the theorem, see Theorem \ref{theorem:totasym}. Let us note that from the results mentioned above it follows that if, for a compact space $K$, the space $C_p(K)$ does not admit a separable quotient, then $K$ must be an \textit{Efimov} space, i.e. an infinite compact space which contains neither a convergent sequence nor a copy of $\beta \omega$; such spaces $K$ have been so far constructed only consistently and it is not known whether any of those examples $K$ is such that $C_p(K)$ does not admit a separable quotient, see e.g. \cite{KSZ23,KS18} for details.

\medskip

The paper is organized as follows. In Section \ref{sec:Prelim} we recall the notation and necessary definitions for the rest of the paper. In Section \ref{sec:Sequences} we investigate weak* closures of countable subsets of $C_p(X)^*$ and introduce so-called simple JN-sequences. Section \ref{sec:Sym} is devoted to study properties of symmetric compactifications of $\omega$ and their relations to JN-sequences and (sq)-sequences. Finally, in Section \ref{sec:Asym} we study totally asymmetric compactifications of $\omega$ and their relations to the lack of separable quotients in spaces $C_p(X)$.

\section{Preliminaries} \label{sec:Prelim}

For a set $X$, by $|X|$ we denote its cardinality and by $\wp(X)$ and $\ctblsub{X}$ the collections of all subsets of $X$ and of all countable infinite subsets of $X$, respectively. The identity function on $X$ is denoted by $\id_X$.

By $\omega$ we denote the first infinite limit ordinal number. As usual, we identify $\omega$ with the discrete space of all natural numbers. Note that for any compactification $b\omega$ of $\omega$ and $n\in\omega$ the singleton $\{n\}$ is a clopen subset of $b\omega$, i.e. the point $n$ is isolated in $b\omega$. We also set $\EE=\{2n\colon n\io\}$ and $\OO=\{2n+1\colon n\io\}$. A pair $(A,B)$ of subsets of $\omega$ is a \textit{partition} of $\omega$ if $A,B\in\cso$, $\omega=A\cup B$, and $A\cap B=\emptyset$.

We work exclusively with topological spaces which are Tychonoff (so, in particular, compact spaces are assumed to be normal). Let $X$ be a set. If $\tau$ is a topology on $X$, then for a set $A\sub X$ we denote its closure in $X$ with respect to $\tau$ by $\ol{A}^{(X,\tau)}$, $\ol{A}^\tau$, or $\ol{A}^X$, or even briefly by $\ol{A}$ if the topology is clear from the context. If $\sim$ is a closed equivalence relation on a topological space $X$, then $X/_\sim$ denotes the quotient space of $X$ modulo $\sim$. For two topological spaces $X$ and $Y$ we define their disjoint union by $X\sqcup Y=(X\times\{0\})\cup(Y\times\{1\})$, equipped with the standard topology. 

Let $X$ be a topological space. By $C(X)$ we denote the set of all continuous real-valued functions on $X$. By $C_p(X)$ we denote the set $C(X)$ endowed with the pointwise topology $\tau_p$ inherited from the space $\R^X$. For any $f\in C_p(X)$, $x_1,\ldots,x_n\in X$, and $\eps>0$, we set
\[V(f;x_1,\ldots,x_n;\eps)=\{g\in C_p(X)\colon\ |f(x_i)-g(x_i)|<\eps\text{ for every }i=1,\ldots, n\big\},\]
so $V(f;x_1,\ldots,x_n;\eps)$ is a standard subbasic subset of $C_p(X)$. If $X$ is compact, then $C(X)$ is by default endowed with the supremum norm $\|\cdot\|_\infty$, which makes it a Banach space. 

By $Bor(X)$ we denote the $\sigma$-field of all Borel subsets of $X$ and by $Clopen(X)$ the Boolean algebra of all clopen subsets of $X$. A function $\mu\colon Bor(X)\to\R$ is a \textit{measure} on $X$ if $\mu$ is $\sigma$-additive, inner regular with respect to compact sets, and outer regular with respect to open sets. Note that such functions are usually called \textit{Radon measures}, we drop the word Radon for the sake of brevity. For a point $x\in X$ by $\delta_x$ we denote the one-point measure concentrated at $x$ (i.e. \textit{Dirac delta} at $x$). The \textit{total variation} $\|\mu\|$ of $\mu$ is defined as follows:
\[\|\mu\|=\sup\big\{|\mu(A)|+|\mu(B)|\colon\ A,B\in Bor(X), A\cap B=\emptyset\big\}.\]
The set of all measures on $X$ is denoted by $M(X)$. Recall that $(M(X),\|\cdot\|)$ is a Banach space. We also set
\[M^1(X)=\big\{\mu\in M(X)\colon \|\mu\|\le1\big\}.\]
A subset $A\sub M(X)$ is \textit{normalized} if $\|\mu\|=1$ for every $\mu\in A$.

A measure $\mu$ on $X$ is said to be \textit{finitely supported} if there are $n\io$, a sequence $x_1,\ldots,x_n$ of distinct points in $X$, and a sequence $\alpha_1,\ldots,\alpha_n$ of non-zero real numbers such that $\mu=\sum_{i=1}^n\alpha_i\delta_{x_i}$; in this case we have $\|\mu\|=\sum_{i=1}^n|\alpha_i|$. The set of all finitely supported measures on $X$ is denoted by $M_f(X)$; put also $M_f^1(X)=M_f(X)\cap M^1(X)$. For each $\mu\in M_f(X)$ we define the following three sets:
\begin{itemize}
    \item $\supp(\mu)=\{x\in X\colon \mu(\{x\})\neq 0\}$ (the \textit{support} of $\mu$),
    \item $\supp^+(\mu)=\{x\in X\colon \mu(\{x\})>0\}$,
    \item $\supp^-(\mu)=\{x\in X\colon \mu(\{x\})<0\}$.
\end{itemize}
Of course, $\supp(\mu)=\supp^+(\mu)\cup\supp^-(\mu)$. For each $k\io$ we also set 
\[M_k(X)=\big\{\mu\in M_f(X)\colon |\supp(\mu)|\le k\big\}\]
and $M_k^1(X)=M_k(X)\cap M^1(X)$. 

We say that a sequence $(\mu_n)_{n\io}$ in $M_f(X)$ is \textit{disjointly supported} if $\supp(\mu_n)\cap\supp(\mu_k)=\emptyset$ for every $n\neq k\io$, and that $(\mu_n)_{n\io}$ is \textit{discretely supported} if $\bigcup_{n\io}\supp(\mu_n)$ is a discrete subset of $X$.

If $X$ is compact, then by the Riesz--Markov--Kakutani theorem the space $M(X)$ is identified with the dual Banach space $C(X)^*$, with the action of $M(X)$ on $C(X)$ given for every $\mu\in M(X)$ by the formula
\[C(X)\ni f\longmapsto\int_X fd\mu.\]
Similarly, the topological dual $C_p(X)^*$ of the (locally convex) space $C_p(X)$ is identified with the linear space $M_f(X)$, with the action of $M_f(X)$ on $C_p(X)$ given for every $\mu\in M_f(X)$, $\mu=\sum_{i=1}^n\alpha_i\delta_{x_i}$, by the formula
\[C_p(X)\ni f\longmapsto\int_X fd\mu=\sum_{i=1}\alpha_i f(x_i).\]
By the weak* topology $w^*$ on $M_f(X)$ (resp. $M(X)$ if $X$ is compact) we mean the topology induced by $C_p(X)$ (resp. $C(X)$) via the above action. Basic weak* neighbourhoods of any measure $\mu\in M_f(X)$ are then given by the formula
\[V(\mu;f_1,\ldots,f_n;\eps)=\big\{\nu\in M_f(X)\colon\ |\mu(f_i)-\nu(f_i)|<\eps\text{ for every }i=1,\ldots, n\big\},\]
where $f_1,\ldots,f_n\in C_p(X)$ and $\eps>0$, so $V(\mu;f_1,\ldots,f_n;\eps)$ is a standard subbasic subset of $(M_f(X),w^*)$.

The (real) Banach spaces $c_0$, $\ell_1(\Gamma)$, and $\elli(\Gamma)$ are defined in the standard way. By $(c_0)_p$ we mean the space $c_0$ endowed with the pointwise topology $\tau_p$ inherited from $\R^\omega$. The weak topology of any Banach space $E$ will be denoted by $w$.

\section{Sequences in $C_p(K)^*$ and the zero measure $0$} \label{sec:Sequences} 

In this warm-up section we provide some basic criteria for the zero measure $0$ to belong to the weak* closures of countable sets of measures with supports of size $2$ as well we prove that so-called simple JN-sequences are (sq)-sequences. 
%
We start with the following proposition.

\begin{proposition}\label{prop:zero}
    Let $X$ be a Tychonoff space containing a countable infinite discrete set $D$ whose closure $\ol{D}$ in $X$ is compact. Then, there is a sequence $(\mu_n)_{n\io}$ in $M_f(X)$ such that
    \begin{enumerate}[(a)]
        \item $\|\mu_n\|=1$, $\supp(\mu_n)\sub D$, and $|\supp(\mu_n)|=2$ for every $n\io$,
        \item $0\in\ol{\{\mu_n\colon n\io\}}^{w^*}$.
    \end{enumerate}
\end{proposition}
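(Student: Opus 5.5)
Enumerate $D=\{d_k\colon k\io\}$ and take as candidate family all measures $\mu_{k,l}=\frac12(\delta_{d_k}-\delta_{d_l})$ with $k<l$. These are countably many, each has norm $1$, support of size $2$ contained in $D$, so (a) holds for any enumeration $(\mu_n)_{n\io}$ of them. It remains to prove (b): $0$ is in the weak* closure of $\{\mu_{k,l}\colon k<l\}$.

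Fix a basic neighbourhood $V(0;f_1,\ldots,f_m;\eps)$, where $f_1,\ldots,f_m\in C_p(X)$. We need $k<l$ with $|f_i(d_k)-f_i(d_l)|<2\eps$ for all $i\le m$. Consider $F=(f_1,\ldots,f_m)\colon X\to\R^m$. Since $\ol{D}$ is compact and $F$ is continuous, $F[\ol{D}]$ is compact, hence bounded in $\R^m$. The infinite sequence $(F(d_k))_{k\io}$ lies in a bounded set, so by Bolzano--Weierstrass (or simply pigeonhole: cover the bounded set by finitely many cubes of side less than $2\eps/\sqrt{m}$, or in sup-norm side less than $2\eps$) there are two indices $k<l$ with $F(d_k)$ and $F(d_l)$ in the same small cube, i.e. $|f_i(d_k)-f_i(d_l)|<2\eps$ for each $i$. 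Then $|\mu_{k,l}(f_i)|=\frac12|f_i(d_k)-f_i(d_l)|<\eps$, so $\mu_{k,l}\in V(0;f_1,\ldots,f_m;\eps)$.

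Since basic neighbourhoods of $0$ are finite intersections of such sets (and a finite intersection of subbasic sets is handled by just taking all the functions together), this shows $0\in\ol{\{\mu_n\colon n\io\}}^{w^*}$. The only point needing care is the boundedness of $f_i$ on $D$, which is exactly where compactness of $\ol{D}$ is used; discreteness of $D$ is not even needed beyond ensuring the $d_k$ are distinct (so that the supports have size exactly $2$). I do not expect a genuine obstacle: the pigeonhole step is the heart of the argument.
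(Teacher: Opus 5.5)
Your argument is correct, and it takes a genuinely different and more elementary route than the paper.

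The paper first works in $C_p(\overline{D})^*$. When $\overline{D}=\beta D$, it identifies the measures $\frac12(\delta_a-\delta_b)$ with vectors of $\ell_1(D)$ and notes that their weak* topology coincides with the weak topology of $\ell_1(D)$, because $C(\beta D)\cong\ell_\infty(D)$. It then cites \cite[Lemma 3.2]{GKS17} to get $0$ in the weak closure. The case $\overline{D}\neq\beta D$ is handled by pushing these measures forward along the canonical surjection $\beta D\to\overline{D}$. The passage from $\overline{D}$ to $X$ uses the adjoint of the restriction operator, and weak*--weak* continuity of adjoints does the work at each step.

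You instead test directly against finitely many $f_1,\ldots,f_m\in C_p(X)$ and apply pigeonhole in a bounded subset of $\R^m$. The $\ell_1(D)$ fact the paper cites is itself just your argument applied to finitely many $g_1,\ldots,g_m\in\ell_\infty(D)$.

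Your route is self-contained and needs no case split and no external lemma. It also makes visible that the only hypothesis really used is that every continuous function on $X$ is bounded on the infinite set $D$. Discreteness of $D$ is not needed at all, and compactness of $\overline{D}$ enters only through this boundedness.

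The paper's route buys mainly a conceptual framing. It places the statement in the Banach-space setting of the weak topology of $\ell_1(D)$, and it shows how weak* closure properties transfer along continuous maps via adjoint operators, a viewpoint that fits the rest of the paper.
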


\begin{proof}
    Let $F=\ol{D}^X$. We first derive the conclusion for $C_p(F)^*$ instead of $C_p(X)^*$. We will consider two cases.

    Assume first that $F=\beta D$. For each distinct $a,b\in D$, let
    \[\nu_{a,b}=\frac{1}{2}(\delta_a-\delta_b).\]
    Of course, every $\nu_{a,b}$ trivially satisfies the conditions listed in (a).

    Each measure $\nu_{a,b}$ can be treated in a natural way as an element $x_{a,b}$ of the Banach space $\ell_1(D)$. Then, the weak* topology on the set $\{\nu_{a,b}\colon a\neq b\in D\}$ in $C_p(F)^*$ coincides with the weak topology on $\{x_{a,b}\colon a\neq b\in D\}$ in $\ell_1(D)$ (this is because the weak topology on $\ell_1(D)$ is generated by $\ell_\infty(D)$, but $\ell_\infty(D)$ can be canonically identified with $C_p(F)$ as any bounded function on $D$ admits a unique extension to $\beta D = F$). From \cite[Lemma 3.2]{GKS17} it follows that 
    \[0\in\ol{\{x_{a,b}\colon\ a\neq b\in D\}}^w,\]
    and so
    \[0\in\ol{\{\nu_{a,b}\colon\ a\neq b\in D\}}^{w^*}.\]
    Since the set $D$ is countable, we can simply enumerate $\{\nu_{a,b}\colon a\neq b\in D\}$ as $({\mu_n})_{n\io}$.

\medskip

    Suppose now that $F\neq\beta D$. There is a continuous surjection $\varphi\colon\beta D\to F$ extending the identity $\id_D$. Let $({\mu_n})_{n\io}$ be the sequence of measures on $\beta D$ defined as in the first part of the proof. Let $T\colon C_p(\beta D)^*\to C_p(F)^*$ be the operator adjoint to the composition operator with $\varphi$, that is, it is induced by the assignment $T\delta_x=\delta_{\varphi(x)}$ for every $x\in\beta D$. For each $n\io$ set $\theta_n=T(\mu_n)$. Since $\varphi\rstr D=\id_D$, condition (a) is still satisfied for the sequence $({\theta_n})_{n\io}$. Further, (b) holds as $T$, being an adjoint operator, is weak*--weak* continuous.

\medskip

    To get the conclusion for $C_p(X)^*$, let $({\mu_n})_{n\io}$ be a sequence in $M_f(F)$ satisfying conditions (a) and (b). Let $S\colon C_p(F)^*\to C_p(X)^*$ be the operator adjoint to the restriction operator to $F$, that is, it is given by $S(\mu)(f)=\mu(f\rstr F)$ for every $\mu\in C_p(F)^*$ and $f\in C_p(X)$. For each $n\io$ set $\rho_n=S(\mu_n)$. Then, $({\rho_n})_{n\io}$ satisfies (a) trivially. Moreover, again as $S$ is weak*--weak* continuous (being the adjoint operator), it satisfies (b), too.  
\end{proof}

\begin{lemma}\label{lem:bo_weakstar}
    For any infinite co-infinite subset $A$ of $\omega$ we have
    \[0\not\in\ol{\{\delta_{n}-\delta_{m}\colon\ n\in A, m\in\omega\sm A\}}^{(C_p(\bo)^*,w^*)}.\]
\end{lemma}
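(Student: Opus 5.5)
We must exhibit a single weak* neighbourhood of the zero measure that misses every measure $\delta_n-\delta_m$ with $n\in A$ and $m\in\omega\sm A$. The natural choice is the characteristic function $f=\chi_{\ol{A}^{\bo}}$. Since $\bo$ is the \v{C}ech--Stone compactification of the discrete space $\omega$, the closure $\ol{A}^{\bo}$ is clopen in $\bo$. Hence $f$ is continuous, i.e. $f\in C_p(\bo)$. Equivalently, $f$ is the unique continuous extension to $\bo$ of the bounded function $\chi_A\in\elli$, as in the proof of Proposition \ref{prop:zero}.

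Next, evaluate $f$ on the measures in question. For $n\in A$ and $m\in\omega\sm A$ we get
\[(\delta_n-\delta_m)(f)=f(n)-f(m)=1-0=1.\]
Therefore none of these measures lies in the basic weak* neighbourhood
\[V(0;f;1/2)=\big\{\nu\in M_f(\bo)\colon |\nu(f)|<1/2\big\}\]
of $0$. In fact the whole set $\{\delta_n-\delta_m\colon n\in A, m\in\omega\sm A\}$ lies in the weak* closed set $\{\nu\colon \nu(f)=1\}$, which does not contain $0$. This proves the lemma.

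The only point requiring care is that $\chi_A$ indeed extends continuously to $\bo$. This holds either because disjoint subsets of $\omega$ have disjoint closures in $\bo$, so that $\ol{A}^{\bo}$ and $\ol{\omega\sm A}^{\bo}$ form a clopen partition of $\bo$, or directly from the universal property of $\bo$. The hypothesis that $A$ is infinite and co-infinite is only needed to make the set of measures infinite; the argument itself does not use it.
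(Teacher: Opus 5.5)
Your proof is correct and follows the same approach as the paper's: you take the characteristic function of the clopen set $\overline{A}^{\beta\omega}$ and note that every $\delta_n-\delta_m$ with $n\in A$, $m\in\omega\setminus A$ takes the value $1$ on it. The paper does exactly this, using the neighbourhood $\{\mu\colon |\mu(\chi_U)|<1\}$ where you use $\varepsilon=1/2$; the difference is immaterial.
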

\begin{proof}
    Set $U=\ol{A}^{\bo}$. Then, $U$ is a clopen subset of $\bo$ and so the characteristic function $\chi_U$ of $U$ in $\bo$ is continuous, that is, $\chi_U\in C_p(\bo)$. The set
    \[V=\{\mu\in C_p(\bo)^*\colon |\mu(\chi_U)|<1\}\]
    is a weak* open neighbourhood of $0$. However, since $(\delta_n-\delta_m)(\chi_U)=1$ for every $n\in A$ and $m\in\omega\sm A$, we get
    \[V\cap\{\delta_{n}-\delta_{m}\colon\ n\in A, m\in\omega\sm A\}=\emptyset,\]
    which finishes the proof.
\end{proof}

\begin{proposition}\label{prop:zero2}
Let $D$ be a countable infinite discrete subset of a compact space $K$. Then, there is an infinite subset $A\sub D$ such that $D\sm A$ is infinite too and 
\[0\in\ol{\{\delta_{a}-\delta_{b}\colon\ a\in A, b\in D\sm A\}}^{(C_p(K)^*,w^*)}\]
if and only if $\ol{D}^K\neq\beta D$.
\end{proposition}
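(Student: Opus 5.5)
The two directions are handled separately.

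\emph{Forward direction ($\ol{D}^K=\beta D$ excludes such $A$).} We argue by contraposition, so assume $\ol{D}^K=\beta D$. Fix any infinite, co-infinite $A\sub D$.

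Since $\ol{D}^K$ is homeomorphic to $\beta D$, the set $U=\ol{A}^K$ is clopen in $\ol{D}^K$. Also $\ol{D\sm A}^K=\ol{D}^K\sm U$. Hence there is a continuous $g\colon\ol{D}^K\to\R$ with $g\rstr A\equiv1$ and $g\rstr(D\sm A)\equiv0$, namely $\chi_U$.

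By the Tietze extension theorem (compact spaces are normal), $g$ extends to some $f\in C_p(K)$. Then
\[(\delta_a-\delta_b)(f)=1\quad\text{for all } a\in A,\ b\in D\sm A.\]
So the weak* neighbourhood $\{\mu\colon|\mu(f)|<1\}$ of $0$ misses the whole set, exactly as in Lemma~\ref{lem:bo_weakstar}. Thus no such $A$ exists.

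\emph{Converse direction ($\ol{D}^K\neq\beta D$ produces $A$).} Assume $F=\ol{D}^K\neq\beta D$. Then there exist disjoint sets $P,Q\sub D$ whose closures in $F$ meet; otherwise $F$ would have the Čech--Stone extension property and equal $\beta D$. We may assume $P,Q$ are infinite, since finite subsets of the discrete set $D$ are closed. Enlarging $P$ by $D\sm(P\cup Q)$ if necessary, we may assume $Q=D\sm P$. Put $A=P$ and fix a point $x\in\ol{A}\cap\ol{D\sm A}$.

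Now we show $0$ lies in the weak* closure of $\{\delta_a-\delta_b\colon a\in A,\ b\in D\sm A\}$. A basic weak* neighbourhood of $0$ is determined by $f_1,\dots,f_k\in C(K)$ and $\eps>0$. By continuity at $x$ there is an open $W\ni x$ on which each $f_i$ varies by less than $\eps$. Since $x$ lies in both closures, $W$ meets $A$ at some $a$ and meets $D\sm A$ at some $b$. Then
\[|(\delta_a-\delta_b)(f_i)|=|f_i(a)-f_i(b)|<\eps\quad\text{for every } i,\]
so $\delta_a-\delta_b$ lies in the neighbourhood.

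The only delicate step is justifying that $F\neq\beta D$ yields disjoint subsets of $D$ with intersecting closures. This is the standard characterization: a compactification $F$ of a discrete $D$ equals $\beta D$ iff disjoint subsets of $D$ have disjoint closures. One way to see it: if closures of disjoint sets are always disjoint, then every $\{0,1\}$-valued function on $D$ extends continuously, and since $D$ is dense in $F$, every bounded function extends via uniform approximation by simple functions. Alternatively, in the nontrivial direction, take two distinct points of $\beta D$ identified by the canonical map $\beta D\to F$, separate them by complementary clopen sets $\ol{P}^{\beta D}$, $\ol{D\sm P}^{\beta D}$, and push forward.
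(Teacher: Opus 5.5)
Your proof is correct and follows the paper's argument. For $\ol{D}^K=\beta D$ you use the clopen set $\ol{A}$ together with the Tietze extension theorem, as in Lemma~\ref{lem:bo_weakstar}. For $\ol{D}^K\neq\beta D$ you approximate $\delta_a-\delta_b$ through a common point $x\in\ol{A}\cap\ol{D\sm A}$, which is the paper's $V+V\sub U$ argument written out with explicit $\eps$-estimates; you also justify why $\ol{D}^K\neq\beta D$ yields such $A$ and $x$, a step the paper only asserts.
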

\begin{proof}
    Assume that $\ol{D}^K=\beta D$. Then, by Lemma \ref{lem:bo_weakstar}, for every infinite subset $A$ of $D$ such that $D\sm A$ is infinite we have
    \[0\not\in\ol{\{\delta_{a}-\delta_{b}\colon\ a\in A, b\in D\sm A\}}^{(C_p(\ol{D}^K)^*,w^*)}\]
    and hence, by the Tietze extension theorem,
    \[0\not\in\ol{\{\delta_{a}-\delta_{b}\colon\ a\in A, b\in D\sm A\}}^{(C_p(K)^*,w^*)}.\]
    Consequently, implication $\Rightarrow$ holds.
    
    To prove implication $\Leftarrow$, assume that $\ol{D}^K\neq\beta D$. There are an infinite subset $A$ of $D$ and a point $c\in\ol{A}^K\cap\ol{D\sm A}^K$. Note that $D\sm A$ is infinite, too. We claim that
    \[0\in\ol{\{\delta_a-\delta_b\colon\ a\in A,b\in D\sm A\}}^{(C_p(K)^*,w^*)}.\]
    To see this, let $U$ be a weak* open neighbourhood of $0$ in $C_p(K)^*$. There is a convex open subset $V$ of $U$ such that $0\in V$, $V=-V$, and $V+V\sub U$. Since $\delta_c+V$ is an open neighbourhood of $\delta_c$, there are $a\in A$ and $b\in D\sm A$ such that $\delta_a,\delta_b\in\delta_c+V$. Consequently, $\delta_a-\delta_c\in V$ and $\delta_c-\delta_b\in-V=V$, and hence
    \[(\delta_a-\delta_c)+(\delta_c-\delta_b)\in V+V\sub U,\]
    that is, $\delta_a-\delta_b\in U$.
\end{proof}

\subsection{Simple JN-sequences and (sq)-sequences}

It follows from the above propositions that it is not hard to find a countable set of normalized elements of a given space $M_2(X)$ that has $0$ in its weak* closure. However, nothing guarantees that this countable set is actually a weak* convergent sequence and this also motivated introducing the notion of JN-sequences (Definition \ref{def:JNseq}). Surprisingly, it turned out that they have a particularly good feature of being resilient to various modifications, and so, as shown in \cite{MSZ24}, if a space $M_f(X)$ admits JN-sequences, then they can always be chosen to be of a particularly nice type.

\begin{definition}\label{def:JNseq_simple}
    Let $X$ be a Tychonoff space. We say that a JN-sequence $(\mu_n)_{n\io}$ of measures on $X$ is \textit{simple} if the following conditions hold:
    \begin{itemize}
        \item $(\mu_n)_{n\io}$ is disjointly supported,
        \item $(\mu_n)_{n\io}$ is discretely supported,
        \item $\mu_n\big(\supp^+(\mu_n)\big)=1/2$ and $\mu_n\big(\supp^-(\mu_n)\big)=-1/2$ for every $n\io$.
    \end{itemize}
\end{definition}

\begin{theorem}[{\cite[Theorem 1.2]{MSZ24}}]\label{theorem:JNseq_simple}
    If a Tychonoff space $X$ carries a JN-sequence, then $X$ carries a simple JN-sequence.
\end{theorem}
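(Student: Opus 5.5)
The plan is to start from an arbitrary JN-sequence $(\mu_n)_{n\io}$ on $X$ and apply a finite chain of modifications. Each modification will either pass to a subsequence or change $\mu_n$ by a perturbation that is harmless for weak* convergence. The three conditions of Definition \ref{def:JNseq_simple} would be obtained one at a time: first disjointness of supports, then discreteness of their union, and finally the normalization $\mu_n(\supp^{\pm}(\mu_n))=\pm 1/2$.

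\textbf{Step 1 (disjointness).} Let $S_n=\supp(\mu_n)$. By a diagonal argument I would pass to a subsequence such that, for every point $x\in\bigcup_n S_n$, the weights $\mu_n(\{x\})$ converge. I would then split each $\mu_n=\alpha_n+\beta_n$ into an ``old'' part $\alpha_n$ and a ``new'' part $\beta_n$:
\begin{itemize}
\item $\alpha_n$ is the restriction of $\mu_n$ to the finite set $\bigcup_{k<n}S_k$;
\item $\beta_n$ is the restriction of $\mu_n$ to $S_n\setminus\bigcup_{k<n}S_k$.
\end{itemize}
The $\beta_n$ are disjointly supported by construction. By a gliding-hump refinement (choosing the subsequence fast enough), I would arrange that $\alpha_n$ is norm-close to a fixed finitely supported $\lambda_n$ living on the earlier supports, with $\lambda_n$ stabilizing pointwise to a countably supported $\lambda$.

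The key trick is then to replace $\mu_n$ by differences $\mu_{n_{2k}}-\mu_{n_{2k+1}}$ of consecutive terms of a suitably sparse subsequence. Such differences are still weak* null. The old parts nearly cancel, so the differences are essentially carried by the disjoint new parts, and their norms stay bounded away from $0$ (otherwise the new mass would vanish and one would use the $\alpha$'s instead).

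\textbf{Step 2 (discreteness).} Once supports are disjoint, I would choose inductively a further subsequence and, using complete regularity of $X$, open sets $U_k\supseteq\supp(\mu_{n_k})$. The aim is for the union of supports to be discrete. The idea is to avoid accumulation points of $\bigcup_{j<k}\supp(\mu_{n_j})$ and to make each new support, in a suitable sense, separated from all earlier ones. Since each support is finite, this is a standard inductive separation, possibly after discarding points of tiny weight.

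\textbf{Step 3 (normalization).} Write $\mu_n=\mu_n^+-\mu_n^-$. Since $\mu_n(1_X)\to0$ and $\|\mu_n^+\|+\|\mu_n^-\|=1$, both $\|\mu_n^\pm\|\to1/2$, so they are bounded away from $0$ for large $n$. I would set
\[
\nu_n=\frac{\mu_n^+}{2\|\mu_n^+\|}-\frac{\mu_n^-}{2\|\mu_n^-\|}.
\]
Then $\|\nu_n-\mu_n\|\to0$, and the supports of $\nu_n$ are those of $\mu_n$.

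\textbf{Main obstacle.} The step I expect to be hardest is the perturbation argument used in Steps 1 and 3. A norm-null perturbation is weak* null only if every $f\in C_p(X)$ is bounded on $\bigcup_n\supp(\mu_n)$, and on a non-pseudocompact $X$ this is not automatic. I would first try to prove a lemma: after passing to a subsequence, any JN-sequence can be taken with $f$ bounded on the union of supports for every $f\in C_p(X)$. If some $f$ is unbounded there, one can pick a discrete sequence of support points with $|f|\to\infty$ and, by a gliding-hump argument, build $g\in C_p(X)$ with $\mu_n(g)\not\to0$, contradicting weak* nullity.

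If that fails, I would instead handle the non-pseudocompact case directly. There, a discrete $C$-embedded sequence $(x_k)$ exists, and one can try to build a simple JN-sequence from pairs of points adapted to the given $(\mu_n)$. The pseudocompact case, where every $f$ is bounded, is then covered by the perturbation argument above.
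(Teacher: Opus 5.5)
The paper does not prove this statement; it imports it from \cite[Theorem~1.2]{MSZ24}, so your proposal has to stand on its own. Steps 1 and 3 work once your boundedness lemma is available, and that lemma is true even for the whole sequence. Suppose some $f\ge 0$ is unbounded on $\bigcup_n\supp(\mu_n)$. A gliding hump with bumps $\mu_{n_k}(\{x_k\})^{-1}\psi_k(f)h_k$, placed at support points $x_k$ with $f(x_k)\to\infty$ fast, then gives a locally finite and hence continuous sum $g$ with $\mu_{n_k}(g)\to 1$. Also, the fallback case in Step 1 never arises. If $\|\lambda\|=1$, then $\mu_{n_k}\to\lambda$ in norm, and $\lambda$ would be a non-zero countably supported measure annihilating $C(X)$.

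\textbf{The genuine gap is Step 2.} The set $\bigcup_{j<k}\supp(\mu_{n_j})$ is finite, so it has no accumulation points. The real danger is that \emph{later} supports accumulate at points of \emph{earlier} ones. No choice of subsequence, combined with norm-null perturbations (discarding weights tending to $0$) or with differences of terms, can prevent this.

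A counterexample: in $K=[0,\omega^2]$ put $z_m=\omega\cdot m$ and $z_m'=\omega\cdot m+1$. For $1\le j<m$ put $y_{m,j}=\omega\cdot(j-1)+2m$ and $y'_{m,j}=y_{m,j}+1$. Let $\mu_m=c_m\bigl(\tfrac14(\delta_{z_m}-\delta_{z'_m})+\sum_{j<m}2^{-j-2}(\delta_{y_{m,j}}-\delta_{y'_{m,j}})\bigr)$, with $c_m\in[1,2]$ normalizing. This is a disjointly supported JN-sequence: $z_m,z'_m\to\omega^2$, and for each fixed $j$ we have $y_{m,j},y'_{m,j}\to z_j$, so dominated convergence applies.

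Now take any subsequence $(m_i)$ and any $(\nu_i)$ with $\|\nu_i-\mu_{m_i}\|\to 0$. Fix $i$ with $\|\nu_i-\mu_{m_i}\|<1/4$; then $z_{m_i}\in\supp(\nu_i)$. For all large $l$, the points $y_{m_l,m_i}$ carry weight at least $2^{-m_i-2}$ in $\mu_{m_l}$, so they lie in $\supp(\nu_l)$. They converge to $z_{m_i}$, so $\bigcup_l\supp(\nu_l)$ is never discrete. The same holds for differences of terms, since the two measures in a difference have disjoint supports and nothing cancels.

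This $K$ does carry a simple JN-sequence, for instance the normalized $y$-parts of the $\mu_m$ for $m\ge 2$. Reaching it, however, requires throwing away a non-negligible portion of every measure while keeping weak* nullity and a lower bound on the norms. Nothing in your plan does this: subsequences, inductive separation and tiny-weight discarding cannot. Supplying such a selection argument is exactly the non-trivial part of the cited theorem, so as it stands your proposal does not prove discreteness.
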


%

%
By the following proposition simple JN-sequences are (sq)-sequences.

\begin{proposition}\label{prop:simple_jn_sq}
    Let $X$ be a Tychonoff space and let $(\mu_n)_{n\io}$ be a JN-sequence on $X$. If $(\mu_n)_{n\io}$ is simple, then $(\mu_n)_{n\io}$ is an (sq)-sequence.
\end{proposition}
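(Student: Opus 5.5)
The plan is to verify the defining density condition of an (sq)-sequence directly. Fix $f\in C_p(X)$ and a basic neighbourhood $V(f;y_1,\ldots,y_k;\eps)$. It suffices to find $m\io$ and $g\in\bigcap_{n\ge m}\ker(\mu_n)$ with $g(y_i)=f(y_i)$ for all $i$.

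Write $S_n=\supp(\mu_n)$, $D=\bigcup_{n\io}S_n$, $F=\{y_1,\ldots,y_k\}$ and $a_n=\mu_n(f)$, so that $a_n\to0$ by weak* nullness. Since the $S_n$ are pairwise disjoint and $F$ is finite, there is $m$ with $S_n\cap F=\emptyset$ for all $n\ge m$. The candidate is $g=f-H$, where $H$ is continuous, $H\rstr F=0$, and for every $n\ge m$
\[
H(x)=a_n \text{ for } x\in\supp^+(\mu_n),\qquad H(x)=-a_n \text{ for } x\in\supp^-(\mu_n).
\]
Using simplicity, $\mu_n(H)=a_n\cdot\tfrac12-a_n\cdot(-\tfrac12)=a_n$, so $\mu_n(g)=a_n-a_n=0$ for all $n\ge m$. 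This is precisely where the condition $\mu_n(\supp^\pm(\mu_n))=\pm\tfrac12$ is used.

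The construction of $H$ is the main obstacle: $D$ is discrete but need not be closed, and there are infinitely many bumps that may accumulate. I would first use discreteness of $D$: for each $x\in D_m:=\bigcup_{n\ge m}S_n$ choose an open $W_x\ni x$ with $W_x\cap D=\{x\}$ and $W_x\cap F=\emptyset$, which is possible because $x\notin F$. By the Tychonoff property, pick a continuous $\varphi_x\colon X\to[0,1]$ with $\varphi_x(x)=1$ and $\varphi_x=0$ off $W_x$. Rather than summing these bumps, which might not converge to a continuous function, I would take suprema. For $x\in S_n$ with $n\ge m$, let $s_x=+1$ if $x\in\supp^+(\mu_n)$ and $s_x=-1$ otherwise. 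Split $D_m$ into the two classes $P=\{x\colon s_x\operatorname{sgn}(a_{n})\ge0\}$ and $N$, its complement, and set
\[
H^P=\sup_{x\in P}|a_{n(x)}|\varphi_x,\qquad H^N=\sup_{x\in N}|a_{n(x)}|\varphi_x,\qquad H=H^P-H^N,
\]
where $n(x)$ is the index with $x\in S_{n(x)}$.

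Each supremum is continuous. The point is that only finitely many $x$ have $|a_{n(x)}|>\delta$ for a given $\delta>0$, because every $S_n$ is finite and $a_n\to0$. Hence each supremum is within $\delta$ of a finite maximum of continuous functions, i.e. it is a uniform limit of continuous functions (the supremum of the empty family is taken to be $0$). Both suprema vanish on $F$. At a point $x\in D_m$ only $\varphi_x$ is nonzero, since $\varphi_{x'}(x)=0$ for $x'\neq x$ because $W_{x'}\cap D=\{x'\}$. Therefore $H(x)=\pm|a_{n(x)}|=s_xa_{n(x)}$, exactly as required. This gives $g\in\bigcap_{n\ge m}\ker(\mu_n)\cap V(f;y_1,\ldots,y_k;\eps)$ and finishes the argument.
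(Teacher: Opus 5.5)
Your argument is correct, and it takes a different route from the paper. The paper constructs nothing by hand. It invokes the proof of \cite[Theorem 1]{BKS19} to know that $S(f)=(\mu_n(f))_{n\io}$ defines an open continuous linear surjection $C_p(X)\to(c_0)_p$. It then pulls back the dense subspace $\spn\{e_n\colon n\io\}=\bigcup_{m}\spn\{e_n\colon n<m\}$ of $(c_0)_p$. Preimages of dense sets under open maps are dense, and $S^{-1}[\spn\{e_n\colon n<m\}]\sub\bigcap_{n\ge m}\ker(\mu_n)$.

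You instead prove directly the local statement that is actually needed. For each $f$ and finite $F$ you find $m$ and $g=f-H$ with $g\rstr F=f\rstr F$ and $S(g)\in\spn\{e_n\colon n<m\}$. This is in effect a hand-made version of the openness of $S$; your $H$ depends nonlinearly on $(\mu_n(f))_{n}$, which is harmless here.

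The paper's version is shorter and places the result in the $(c_0)_p$-quotient framework. However, it outsources exactly the part where simplicity matters. Yours is self-contained and shows what each hypothesis does:
\begin{itemize}
\item disjointness ensures only finitely many supports meet $F$ and that $n(x)$ is well defined;
\item discreteness provides the separating bumps $\varphi_x$;
\item the $\pm\frac12$ mass condition gives $\mu_n(H)=\mu_n(f)$ when $H=\pm\mu_n(f)$ on $\supp^{\pm}(\mu_n)$;
\item weak* nullness gives the uniform approximation that makes the two suprema continuous.
\end{itemize}
Taking suprema over the two sign classes rather than summing bumps is what lets you avoid any convergence issue in a general Tychonoff space.
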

\begin{proof}
    Assume that $(\mu_n)_{n\io}$ is simple. Let $T\colon C_p(X)\to(c_0)_p$ be defined for every $f\in C_p(X)$ as follows:
    \[S(f)=\big(\mu_n(f)\big)_{n\io}.\]
    It follows by the simplicity of $(\mu_n)_{n\io}$ and the proof of \cite[Theorem 1]{BKS19} that $S$ is an open continuous linear surjection onto $(c_0)_p$.

    For each $n\io$ let $e_n$ denote the $n$-th standard unit vector in $c_0$. The set 
    \[\spn\{e_n\colon\ n\io\}=\bigcup_{m\io}\spn\{e_n\colon\ n<m\}\]
    is dense in $(c_0)_p$, therefore the set
    \[S^{-1}\big[\spn\{e_n\colon\ n\io\}\big]=\bigcup_{m\io}S^{-1}\big[\spn\{e_n\colon\ n<m\}\big]\]
    is dense in $C_p(X)$. For every $m\io$ and $f\in S^{-1}\big[\spn\{e_n\colon\ n<m\}\big]$, we have $S(f)\in\spn\{e_n\colon\ n<m\}$, so $\mu_n(f)=S(f)(n)=0$ for every $n\ge m$, that is, $f\in\ker(\mu_n)$ for every $n\ge m$. Consequently, for every $m\io$ it holds
    \[S^{-1}\big[\spn\{e_n\colon\ n<m\}\big]\sub\bigcap_{n\ge m}\ker(\mu_n),\]
    hence
    \[\bigcup_{m\io}S^{-1}\big[\spn\{e_n\colon\ n<m\}\big]\sub\bigcup_{m\io}\bigcap_{n\ge m}\ker(\mu_n),\]
    and thus the set $\bigcup_{m\io}\bigcap_{n\ge m}\ker(\mu_n)$ is dense in $C_p(X)$. It follows that $(\mu_n)_{n\io}$ is an (sq)-sequence.
\end{proof}

Combining Proposition \ref{prop:simple_jn_sq} with Theorem \ref{theorem:JNseq_simple}, we get the following corollary.

\begin{corollary}\label{cor:jn_implies_simple_jn_sq}
    If a Tychonoff space $X$ carries a JN-sequence, then $X$ carries a sequence of measures which is simultaneously a simple JN-sequence and an (sq)-sequence.
\end{corollary}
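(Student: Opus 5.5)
The corollary follows by chaining the two results stated just before it, so the plan is short. Let $X$ be a Tychonoff space carrying some JN-sequence. First, Theorem \ref{theorem:JNseq_simple} (from \cite{MSZ24}) gives a simple JN-sequence $(\mu_n)_{n\io}$ on $X$. That is, $(\mu_n)_{n\io}$ is normalized and weak* null, disjointly and discretely supported, and satisfies $\mu_n(\supp^+(\mu_n))=1/2$ and $\mu_n(\supp^-(\mu_n))=-1/2$ for every $n\io$.

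Second, we apply Proposition \ref{prop:simple_jn_sq} to this same sequence, without modifying it. Since it is a simple JN-sequence, the proposition shows that it is also an (sq)-sequence. One sequence of measures is therefore both a simple JN-sequence and an (sq)-sequence, which is the claim.

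There is no real obstacle. The only point to check is that we never pass to a subsequence or alter the sequence between the two steps. The sequence produced by Theorem \ref{theorem:JNseq_simple} is fed directly into Proposition \ref{prop:simple_jn_sq}, so it keeps both properties at once. All the substantive work sits in the cited results, namely \cite[Theorem 1.2]{MSZ24} and the openness of the map $f\mapsto(\mu_n(f))_{n\io}$ onto $(c_0)_p$ used in Proposition \ref{prop:simple_jn_sq}.
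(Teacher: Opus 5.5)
Your proposal is correct and matches the paper's argument exactly: the corollary is obtained by applying Theorem \ref{theorem:JNseq_simple} to get a simple JN-sequence and then Proposition \ref{prop:simple_jn_sq} to that same sequence to see that it is also an (sq)-sequence.
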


\section{Symmetric compactifications of the integers} \label{sec:Sym}

In this section we investigate properties of the central notion of the paper, symmetric compactifications of $\omega$. In particular, in Theorem \ref{theorem:SymmetricCase} we provide their various characterizations in terms of JN-sequences and (sq)-sequences, and then we use those characterizations to prove the main result of this paper, Theorem \ref{theorem:sq_jn_2_k} (essentially extending Theorem \ref{theorem:mainA} from Introduction).
We start with two basic examples.

\begin{example}\label{ex:conv}
    Let the ordinal interval $b\omega=\omega\cup\{\omega\}=[0,\omega]$ be endowed with the order topology. Set $A=\EE$ and $B=\OO$. Then, $\ol{A}^{b\omega}=A\cup\{\omega\}$ and $\ol{B}^{b\omega}=B\cup\{\omega\}$. For each $n\io$ let $h(2n)=2n+1$, and $h(\omega)=\omega$. It follows that $h\colon\ol{A}\to\ol{B}$ is a homeomorphism such that $h\rstr(\ol{A}\sm A)=\id_{\ol{A}\sm A}$. Consequently, $[0,\omega]$ is a symmetric compactification of $\omega$.

    The above argument can be easily adapted to work for any countable ordinal $\alpha\ge\omega$ and the interval $[0,\alpha]$.
\end{example}

Directly from Definition \ref{def:symmetric} it follows that if a partition $(A,B)$ of $\omega$ witnesses that a compactification $b\omega$ is symmetric, then $\overline{A}^{b\omega}\sm A=\overline{B}^{b\omega}\sm B=b\omega\sm\omega$. Consequently, if a compactification $\gamma\omega$ of $\omega$ is such that $\big(\overline{A}^{\gamma\omega}\sm A\big)\triangle\big(\overline{B}^{\gamma\omega}\sm B\big)\neq\emptyset$ for any partition $(A,B)$ of $\omega$, then $\gamma\omega$ is not symmetric.

\begin{example}\label{ex:bo}
    $\bo$ is not a symmetric compactification of $\omega$, since, by the standard properties of $\bo$, for any partition $(A,B)$ of $\omega$ we have $\overline{A}^{\bo}\cap\overline{B}^{\bo}=\emptyset$.
\end{example}

In order to consider some further examples of symmetric compactifications, we will need the following piece of notation.

\begin{definition}
    For a compactification $\gamma\omega$ of $\omega$ we define the equivalence relation $\sim_{\gamma\omega}$ on $\gamma\omega\sqcup\gamma\omega$ as the relation having precisely the following classes: $\{(n,i)\}$ for all $n\io$ and $i\in\{0,1\}$, and $\big\{(x,0),(x,1)\big\}$ for all $x\in\gamma\omega\sm\omega$.
\end{definition}

For a compactification $\gamma\omega$ of $\omega$, the quotient space $(\gamma\omega\sqcup\gamma\omega)/_{\sim_{\gamma\omega}}$ may be naturally considered as the space obtained by gluing together in the natural way two copies of the remainder $\gamma\omega\sm\omega$. Also, $(\gamma\omega\sqcup\gamma\omega)/_{\sim_{\gamma\omega}}$ is itself a compactification of $\omega$. 

As one may easily suspect, each symmetric compactification of $\omega$ is in fact of the form $(\gamma\omega\sqcup\gamma\omega)/_{\sim_{\gamma\omega}}$ for some $\gamma\omega$.

\begin{proposition}\label{prop:symmetric_two_copies}
    Let $b\omega$ be a compactification of $\omega$. Let $(A,B)$ be a partition of $\omega$. The following statements are equivalent:
    \begin{enumerate}[(i)]
        \item $b\omega$ is symmetric with respect to the partition $(A,B)$ and some homeomorphism $h\colon\overline{A}^{b\omega}\to\overline{B}^{b\omega}$ such that $h\rstr\big(\overline{A}^{b\omega}\sm A\big)=\id_{\overline{A}^{b\omega}\sm A}$.
        \item There are a compactification $\gamma\omega$ of $\omega$ and homeomorphisms $h_A\colon\overline{A}^{b\omega}\to\gamma\omega$ and $h_B\colon\overline{B}^{b\omega}\to\gamma\omega$ such that the mapping $H\colon b\omega\to(\gamma\omega\sqcup\gamma\omega)/_{\sim_{\gamma\omega}}$, given by the formula $H(x)=\big[(h_A(x),0)\big]_{\sim_{\gamma\omega}}$ for each $x\in\overline{A}^{b\omega}$ and $H(x)=\big[(h_B(x),1)\big]_{\sim_{\gamma\omega}}$ for each $x\in\overline{B}^{b\omega}$, is a homeomorphism.
    \end{enumerate}
\end{proposition}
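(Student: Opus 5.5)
We prove the two implications separately. Throughout, write $\ol{A}=\ol{A}^{b\omega}$, $\ol{B}=\ol{B}^{b\omega}$ and $R=b\omega\sm\omega$. Since $A$ and $B$ are clopen in $\omega$, the sets $\ol{A}$ and $\ol{B}$ are closed and cover $b\omega$. Every point of $R$ is a limit of $\omega$ and lies in $\ol{A}\cup\ol{B}$.

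\emph{(i)$\Rightarrow$(ii).} By the remark after Example \ref{ex:conv}, $\ol{A}\sm A=\ol{B}\sm B=R$. Take $\gamma\omega$ to be a homeomorphic copy of $\ol{A}$ in which the countable dense discrete set $A$ is relabelled as $\omega$ via a bijection $A\to\omega$. Let $h_A\colon\ol{A}\to\gamma\omega$ be the corresponding homeomorphism and put $h_B=h_A\circ h^{-1}$.

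We first check that $H$ is well defined on $\ol{A}\cap\ol{B}=R$. For $x\in R$ we have $h^{-1}(x)=x$, so $h_B(x)=h_A(x)$, which lies in $\gamma\omega\sm\omega$. Hence the classes of $(h_A(x),0)$ and $(h_B(x),1)$ coincide, and $H$ is well defined.

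$H$ is continuous by the pasting lemma, since it is continuous on each of the two closed sets $\ol{A}$ and $\ol{B}$. It is a bijection:
\begin{itemize}
\item $A$ maps bijectively onto the classes $\{(n,0)\}$;
\item $B$ maps bijectively onto the classes $\{(n,1)\}$, because $h$ maps $B$ onto the isolated points of $\ol{B}$, i.e. onto $B$;
\item $R$ maps bijectively onto the glued classes.
\end{itemize}
A continuous bijection from a compact space onto a Hausdorff space is a homeomorphism, so it remains to confirm that the quotient is Hausdorff. This holds because $\sim_{\gamma\omega}$ is a closed equivalence relation on a compact space: it is the union of the diagonal with the compact set $\{((x,0),(x,1)),((x,1),(x,0))\colon x\in\gamma\omega\sm\omega\}$, which is closed.

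\emph{(ii)$\Rightarrow$(i).} Set $h=h_B^{-1}\circ h_A\colon\ol{A}\to\ol{B}$, which is a homeomorphism. We need $h(x)=x$ for $x\in\ol{A}\sm A$, i.e. $h_A(x)=h_B(x)$.

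Homeomorphisms preserve isolated points, and $\gamma\omega\sm\omega$ consists of non-isolated points. Hence $h_A(x)\in\gamma\omega\sm\omega$, so $H(x)$ is the glued class $\{(h_A(x),0),(h_A(x),1)\}$.

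Next we show $x\in\ol{B}$. Since $H$ is surjective, the point $(h_A(x),1)$ lies in a class $H(y)$. That class is not an isolated class $\{(n,1)\}$, because $h_A(x)\notin\omega$. So it is the glued class, and hence $H(y)=H(x)$. Injectivity of $H$ gives $y=x$. Moreover, the class $H(y)$ contains $(h_A(x),1)$, which is consistent with $y\in\ol{B}$ and $H(y)=[(h_B(y),1)]$.

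The formula for $H$ is required to be well defined on $\ol{A}\cap\ol{B}$. So for $x\in\ol{A}\cap\ol{B}$ we get $[(h_A(x),0)]=[(h_B(x),1)]$, and for non-isolated points this forces $h_A(x)=h_B(x)$.

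We must still justify $x\in\ol{B}$ rigorously. Take a net in $\omega$ converging to $x$ and consider its image under $H$. Alternatively, and more simply, use $(h_A(x),1)=\lim (h_A(a_\alpha),1)$ along a net $(a_\alpha)$ in $A$. Its $H$-preimages lie in $B$, and by continuity of $H^{-1}$ they converge to $H^{-1}([(h_A(x),1)])=x$. Hence $x\in\ol{B}$.

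The main obstacle is this bookkeeping about well-definedness of $H$ on $\ol{A}\cap\ol{B}$, together with showing that remainder points of $\ol{A}$ also lie in $\ol{B}$. The net argument through $H^{-1}$ handles the latter.
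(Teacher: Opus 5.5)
Your proof is correct.

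\textbf{(i)$\Rightarrow$(ii).} This is exactly the paper's construction: $h_A$ identifies $\ol{A}$ with some $\gamma\omega$, and $h_B=h_A\circ h^{-1}$. You also supply the routine checks the paper skips: well-definedness on $\ol{A}\cap\ol{B}=b\omega\sm\omega$, pasting, bijectivity, and Hausdorffness of the quotient. One small slip is in the bijectivity bullet. It is $h^{-1}$ that maps $B$ onto $A$, so $h_B=h_A\circ h^{-1}$ maps $B$ onto $\omega$; $h$ itself is not defined on $B$.

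\textbf{(ii)$\Rightarrow$(i).} You use the same $h=h_B^{-1}\circ h_A$ as the paper, but verify $h\rstr(\ol{A}\sm A)=\id$ differently. You first show $x\in\ol{B}$ by pushing a net from $A$ into the second copy of $\gamma\omega$ and pulling it back with the continuity of $H^{-1}$. You then use well-definedness of $H$ on $\ol{A}\cap\ol{B}$ to get $h_A(x)=h_B(x)$. This works.

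The paper's route is shorter and needs only injectivity of $H$. For $x\in\ol{A}\sm A$, put $y=h_B^{-1}(h_A(x))\in\ol{B}$. Since $h_A(x)\notin\omega$,
\[
H(y)=[(h_A(x),1)]=[(h_A(x),0)]=H(x),
\]
so $y=x$. This gives $x\in\ol{B}$ and $h(x)=x$ at once.

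This is what your surjectivity paragraph was missing. The $y$ produced there by surjectivity carries no information that it lies in $\ol{B}$, so concluding $y=x$ proves nothing. That paragraph, including the ``consistent with'' sentence, can be deleted. Choosing $y$ explicitly via $h_B^{-1}$ repairs it and makes the net argument unnecessary.
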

\begin{proof}
    The proof is quite routine, so we skip most details. For implication (i)$\Rightarrow$(ii), note that since $A$ is a countably infinite discrete set, we can identify $\overline{A}^{b \omega}$ with a compactification $\gamma \omega$ of $\omega$ by a homeomorphism $h_A\colon\overline{A}^{b\omega}\to\gamma\omega$. Let $h_B\colon\overline{B}^{b\omega}\to\gamma\omega$ be defined as $h_B=h_A \circ h^{-1}$. Checking that the mapping $H$ defined as in (ii) is a homeomorphism is now routine.

    For implication (ii)$\Rightarrow$(i), it is enough to show that the mapping $h_B^{-1}\circ h_A$ is a homeomorphism satisfying $(h_B^{-1}\circ h_A)\rstr\big(\overline{A}^{b\omega}\sm A\big)=\id_{\overline{A}\sm A}$---this however follows immediately from the observation that $H(x)=H\big((h_B^{-1}\circ h_A)(x)\big)$ for every $x\in\overline{A}^{b\omega}\sm A$.
\end{proof}

It appears that symmetric compactifications of $\omega$ are in fact easy to construct and so they are quite abundant.

\begin{proposition}\label{prop:arbremain}
    Let $\gamma \omega$ be a compactification of $\omega$. Then, the remainders $\gamma \omega \setminus \omega$ and $\big((\gamma\omega\sqcup\gamma\omega)/_{\sim_{\gamma\omega}}\big)\setminus\omega$ are homeomorphic.

    In particular, for a compact space $K$ the following conditions are equivalent:
    \begin{enumerate}[(i)]
        \item $K$ is the remainder of a symmetric compactification of $\omega$,
        \item $K$ is the remainder of a compactification of $\omega$,
        \item $K$ is a continuous image of the \v{C}ech--Stone remainder $\beta \omega \setminus \omega$.
    \end{enumerate}
\end{proposition}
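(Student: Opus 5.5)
First I will show that the two remainders are homeomorphic. Write $Q=\gamma\omega\sqcup\gamma\omega$, $b\omega=Q/_{\sim_{\gamma\omega}}$, and let $q\colon Q\to b\omega$ be the quotient map. The points of $b\omega$ coming from $\omega$ are the classes $\{(n,0)\}$ and $\{(n,1)\}$. The remainder is therefore $R=q\big[(\gamma\omega\sm\omega)\times\{0,1\}\big]$, which is a closed subset of $b\omega$. The map $g\colon\gamma\omega\sm\omega\to R$, $g(x)=q\big((x,0)\big)$, is a bijection, since every remainder class is exactly $\{(x,0),(x,1)\}$. It is continuous as the composition of the embedding into the first copy with $q$. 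Since $\gamma\omega\sm\omega$ is compact and $R$ is Hausdorff, $g$ is a homeomorphism.

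To justify that $R$ is Hausdorff, I need $b\omega$ to be a Hausdorff compactification of $\omega$; the paper asserts this just before. If I check it directly, $\sim_{\gamma\omega}$ is closed in $Q\times Q$: it is the diagonal together with $\{((x,0),(x,1)),((x,1),(x,0))\colon x\in\gamma\omega\sm\omega\}$, which are closed copies of the compact remainder. A closed equivalence relation on a compact Hausdorff space gives a Hausdorff quotient. Each $q\big((n,i)\big)$ is isolated, because its preimage is clopen. The image of $\omega\times\{0,1\}$ is dense, and it can be identified with $\omega$ via any bijection $\omega\times\{0,1\}\to\omega$.

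Next come the equivalences. For (i)$\Rightarrow$(ii), there is nothing to prove. For (ii)$\Rightarrow$(i), I take a compactification $\gamma\omega$ with remainder $K$. By Proposition \ref{prop:symmetric_two_copies} ((ii)$\Rightarrow$(i)), the space $(\gamma\omega\sqcup\gamma\omega)/_{\sim_{\gamma\omega}}$ is symmetric. Concretely, I take $A$ and $B$ to be the two copies of $\omega$, $h_A,h_B$ the obvious identifications, and $H$ the identity. By the first part, this symmetric compactification has remainder homeomorphic to $K$.

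For (ii)$\Leftrightarrow$(iii), I use the universal property of $\beta\omega$. Given $\gamma\omega$, the identity on $\omega$ extends to a continuous surjection $\varphi\colon\beta\omega\to\gamma\omega$. Since points of $\omega$ are isolated in $\gamma\omega$, we get $\varphi^{-1}[\omega]=\omega$, so $\varphi$ maps $\beta\omega\sm\omega$ onto $\gamma\omega\sm\omega$. Conversely, given a continuous surjection $f\colon\beta\omega\sm\omega\to K$, I form the adjunction space $\beta\omega\cup_f K$. This is the quotient of $\beta\omega$ collapsing each fibre $f^{-1}(y)$ and leaving the points of $\omega$ alone. The corresponding equivalence relation is closed, because it is the union of the diagonal with $\{(u,v)\in(\beta\omega\sm\omega)^2\colon f(u)=f(v)\}$, and the latter is closed since $f$ is continuous and $\beta\omega\sm\omega$ is closed. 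So the quotient is compact Hausdorff. In it $\omega$ is dense and consists of isolated points, so it is a compactification of $\omega$ whose remainder is homeomorphic to $K$, again by a compact-to-Hausdorff bijection argument.

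The main step needing care is the Hausdorffness of these quotients, and it is handled by the closed-equivalence-relation criterion. Everything else is routine.
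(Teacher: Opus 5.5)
Your proof is correct and follows the paper's argument. The remainder of $(\gamma\omega\sqcup\gamma\omega)/_{\sim_{\gamma\omega}}$ is identified with $\gamma\omega\setminus\omega$ by restricting the quotient map to one copy, which is a continuous bijection from a compact space onto a Hausdorff space; this glued space then serves as a symmetric compactification realizing any given remainder. The differences are only in detail: you verify Hausdorffness and prove (ii)$\Leftrightarrow$(iii) explicitly via $\beta\omega$ and an adjunction space, whereas the paper simply cites that equivalence as well known. In your adjunction step, continuity of the induced map $K\to R$ needs $f$ to be a quotient map, which holds because $f$ is closed.
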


\begin{proof}
    The first statement follows from the definition of $(\gamma\omega\sqcup\gamma\omega)/_{\sim_{\gamma\omega}}$. Indeed, let $Q\colon\gamma\omega\sqcup\gamma\omega\to(\gamma\omega\sqcup\gamma\omega)/_{\sim_{\gamma\omega}}$ be the canonical quotient map. Then, the restriction $Q \rstr (\gamma\omega\setminus\omega)\times\{0\}$ is a continuous injection from the compact space $(\gamma\omega\setminus\omega)\times\{0\}$, which is clearly homeomorphic to $\gamma\omega\setminus\omega$, onto the space $\big((\gamma\omega \sqcup\gamma\omega)/_{\sim_{\gamma\omega}}\big)\setminus\omega$, and hence $Q \rstr (\gamma\omega\setminus\omega)\times\{0\}$ is a homeomorphism. 

    Consequently, the class of remainders of symmetric compactifications of $\omega$ coincides with the class of all remainders of compactifications of $\omega$, which is well-known to coincide with continuous images of the \v{C}ech--Stone remainder $\beta\omega\setminus\omega$.
\end{proof}

\begin{remark}
    Note that any non-empty separable compact space $K$ as well as any non-empty compact space $K$ of weight at most $\aleph_1$ is a continuous image of the space $\beta\omega\setminus\omega$ (see \cite[Theorem 1.3.3]{vMillHBK}), and so $K$ can be the remainder of a symmetric compactification of $\omega$. For more information concerning spaces being continuous images of $\bo\sm\omega$, see e.g. the works of van Douwen and Przymusi\'{n}ski \cite{Prz82,vDP80}.
\end{remark}

The following example of a compactification of $\omega$ was in fact the main motivation for us to introduce the notion of symmetric compactifications. It was first studied by Arkhangel’ski\u{\i}, Bereznitski\u{\i} \cite{Ber71}, and Schachermayer \cite{Sch82}, and has numerous important applications in $C_p$-theory and $C(K)$-space theory. Its universal role in our study of symmetric compactifications will be explained in Theorem \ref{theorem:SymmetricCase} below.

\begin{example} \label{ex:ABS}
    \textit{The Arkhangel’ski\u{\i}--Bereznitski\u{\i}--Schachermayer space $K_{ABS}$.} The space is constructed as follows. 
    
    First, for each $n\in\EE$ let $h(n)=n+1$. Then, extend the mapping $h\colon\EE\to\OO$ to the (unique) homeomorphism $h\colon\beta\EE\to\beta\OO$. Note that $\beta\EE\cap\beta\OO=\emptyset$. Finally, on the space $\bo=\beta\EE\sqcup\beta\OO$ define the equivalence relation $\sim$ by declaring its classes to be: $[n]_\sim=\{n\}$ for each $n\io$, and $[x]_\sim=\{x,h(x)\}$ for $x\in\beta\EE\sm\EE$. Set $K_{ABS}=\bo/_\sim$.

    Equivalently, $K_{ABS}$ can be defined as the Stone space $St(\aA_{ABS})$ of the Boolean subalgebra $\aA_{ABS}$ of $\wo$ consisting of all those sets $A\in\wo$ for which it holds:
    \[2k\in A\ \Longleftrightarrow\ 2k+1\in A,\]
    for all but finitely many $k\io$. Note that the Boolean algebras $\aA_{ABS}$ and $Clopen(K_{ABS})$ are isomorphic.

    The space $K_{ABS}$ is clearly a compactification of $\omega$. It has already appeared to be useful in studies of quotients of $C_p(X)$-spaces, the Grothendieck property of Banach spaces $C(K)$, and JN-sequences on compact spaces, see e.g. \cite{Ark87,KS13,Mar97,MSZ24,Sch82}. Its most important features are:
    \begin{enumerate}[(1)]
        \item For any (isolated) point $n\in \omega\sub K_{ABS}$, the spaces $K_{ABS}$ and $K_{ABS}\sm\{n\}$ are not homeomorphic (\cite{Ber71}).
        \item $K_{ABS}$ does not contain non-trivial convergent sequences.
        \item The sequence $(\mu_n)_{n\io}$ of measures on $K_{ABS}$, defined for each $n\io$ by the formula
        \[\quad\quad\mu_n=\frac{1}{2}\big(\delta_{2n}-\delta_{2n+1}\big),\]
        is a disjointly supported JN-sequence consisting of measures having supports of size $2$ (\cite[Proposition 6.4]{MSZ24}). In particular, $C(K_{ABS})$ does not have the Grothendieck property.
        \item There is no space of the form $N_F=\omega\cup\{F\}$, where $F$ is a free filter on $\omega$, carrying a JN-sequence and such that it can be homeomorphically embedded into $K_{ABS}$ (\cite[Example 7.8]{MS24}).
        \item The algebra $\aA_{ABS}$ can be written as the union of a countable increasing chain of proper subalgebras (\cite[Example 4.10]{Sch82}). In particular, it does not have the so-called Nikodym property.
        \item $\aA_{ABS}$ has the so-called Weak Subsequential Separation Property (\cite[Proposition 2.5]{KS13}).
        \item The space $C_p(K_{ABS})$ is isomorphic to the space $C_p(K_{ABS})\oplus\R$ (\cite[Remark 7.9]{MS24}).
    \end{enumerate}
\end{example}

\subsection{Characterizations of symmetric compactifications}

The following theorem characterizes symmetric compactifications of $\omega$ and relates them to JN-sequences and (sq)-sequences. It will be a crucial ingredient of the proof of our main result of this section, Theorem \ref{theorem:sq_jn_2_k}. 

\begin{theorem} \label{theorem:SymmetricCase}
    Let a pair $(A,B)$ be a partition of $\omega$ into two infinite sets. Let $A=\{a_n\colon n\io\}$ and $B=\{b_n\colon n\io\}$ be enumerations. Suppose that $b\omega$ is a compactification of $\omega$. For any $n\io$ define the finitely supported measure $\mu_n$ on $b\omega$ as follows:
    \[\mu_n = \frac{1}{2}\big(\delta_{a_n} - \delta_{b_n}\big).\]
    Then, the following statements are equivalent:
    \begin{enumerate}[(i)]
        \item\label{theorem:SymmetricCase:jnseq} $(\mu_n)_{n\io}$ is a JN-sequence.
        \item\label{theorem:SymmetricCase:sqseq} $(\mu_n)_{n\io}$ is an (sq)-sequence.
        \item\label{theorem:SymmetricCase:nhbd} For every $x \in b\omega \setminus \omega$ and open neighbourhood $U$ of $x$ there is an open neighbourhood $V \subseteq U$ of $x$ such that, for every $n \in \omega$, we have: \[\quad\quad a_n \in V\Longleftrightarrow b_n \in V.\]
        \item\label{theorem:SymmetricCase:net} For every net $(n_{\lambda})_{\lambda\in\Lambda}$ of elements of $\omega$ and every $x \in b \omega \setminus \omega$ we have:
        \[\quad\quad a_{n_\lambda} \xrightarrow[\lambda]{\quad} x \iff b_{n_\lambda} \xrightarrow[\lambda]{\quad} x.\]
        \item\label{theorem:SymmetricCase:id} $b\omega$ is a symmetric compactification of $\omega$ with respect to the partition $(A,B)$ and the homeomorphism $h\colon \overline{A} \rightarrow \overline{B}$ satisfying $h(a_n) = b_n$ for every $n \in \omega$ and $h \restriction (\overline{A} \setminus A) = \id_{\overline{A} \setminus A}$.
        \item\label{theorem:SymmetricCase:kbs} There is a continuous surjection $\varphi\colon K_{ABS}\to b\omega$ such that $\varphi(2n)=a_n$ and $\varphi(2n+1)=b_n$ for every $n\io$.
    \end{enumerate}
\end{theorem}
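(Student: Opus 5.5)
I will run through a cycle of implications. The core equivalences are (i)$\Leftrightarrow$(iv)$\Leftrightarrow$(iii)$\Leftrightarrow$(v)$\Leftrightarrow$(vi). After that I will attach (ii) via (i)$\Rightarrow$(ii)$\Rightarrow$(iii).

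\emph{(i)$\Rightarrow$(iv).} I argue by contraposition. Suppose $a_{n_\lambda}\to x$ but $b_{n_\lambda}\not\to x$. Passing to a subnet, there is a closed neighbourhood $W$ of $x$ with $b_{n_\lambda}\notin W$ for all $\lambda$. By Urysohn's lemma take $f\in C(b\omega)$ with $f(x)=1$ and $f\equiv 0$ off the interior of a neighbourhood inside $W$. Then $\mu_{n_\lambda}(f)=\tfrac12 f(a_{n_\lambda})\to\tfrac12$. Since $x\notin\omega$ and points of $\omega$ are isolated, $n_\lambda\to\infty$ in the cofinite sense. Hence $\mu_n(f)\not\to 0$, contradicting (i).

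\emph{(iv)$\Rightarrow$(i).} If $\mu_n(f)\not\to0$, there are $\varepsilon>0$ and an infinite set $N$ with $|f(a_n)-f(b_n)|\ge2\varepsilon$ for $n\in N$. Choose a cluster point of $(a_n)_{n\in N}$ by compactness. It lies in $b\omega\setminus\omega$, because the $a_n$ are distinct. This gives a subnet $a_{n_\lambda}\to x$, and by (iv) also $b_{n_\lambda}\to x$, so $f(a_{n_\lambda})-f(b_{n_\lambda})\to0$, a contradiction.

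\emph{(iii)$\Leftrightarrow$(iv).} Given (iii), if $a_{n_\lambda}\to x$ and $U\ni x$, pick $V\subseteq U$ as in (iii); eventually $a_{n_\lambda}\in V$, hence $b_{n_\lambda}\in V\subseteq U$. For the converse, let $U\ni x$ be open and choose an open $U'$ with $x\in U'\subseteq\overline{U'}\subseteq U$. I define $V$ from $U'$ by deleting, for every $n$, both $a_n$ and $b_n$ whenever exactly one of them lies in $U'$. The set $V$ contains $x$ and satisfies the symmetry condition, and it is open provided $x\notin\overline{E}$, where $E$ is the set of deleted points. Suppose $x\in\overline{E}$. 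Then there is a net of deleted points converging to $x$, and we may assume it consists entirely of $a$'s or entirely of $b$'s. By (iv) the partner net converges to $x$ as well. But each partner lies in the other of $U'$ and its complement, and eventually one of the two nets is in $U'$ while the other is outside $U'$; the net outside $U'$ cannot converge to $x\in U'$, a contradiction.

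\emph{(iv)$\Rightarrow$(v).} Define $h(a_n)=b_n$ and $h=\id$ on $\overline A\setminus A$. First, (iv) forces $\overline A\setminus A=\overline B\setminus B=b\omega\setminus\omega$. Continuity of $h$ at remainder points is exactly (iv), applied to nets $a_{n_\lambda}\to x$ with their partners $b_{n_\lambda}$. A bijective continuous map of compact spaces is a homeomorphism, which gives (v).

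\emph{(v)$\Rightarrow$(iv).} If $a_{n_\lambda}\to x$, then $b_{n_\lambda}=h(a_{n_\lambda})\to h(x)=x$, and symmetrically using $h^{-1}$.

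\emph{(v)$\Leftrightarrow$(vi).} For (vi)$\Rightarrow$(iv), I use that in $K_{ABS}$ the analogue of (iv) holds: clopen sets in $\aA_{ABS}$ eventually contain $2k$ iff they contain $2k+1$. Pushing forward by $\varphi$, if $a_{n_\lambda}\to x$, I take a cluster point $y$ of $2n_\lambda$ in $K_{ABS}$; then $2n_\lambda+1$ clusters to $y$ along the same subnet, and $\varphi(y)=x$. Applying this to all subnets gives (iv). For (v)$\Rightarrow$(vi), extend $2n\mapsto a_n$, $2n+1\mapsto b_n$ to $\psi\colon\beta\omega\to b\omega$. By (v), $\psi$ identifies $u\in\beta\EE\setminus\EE$ with its shift $h(u)\in\beta\OO$, since both are limits of partner nets. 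Hence $\psi$ factors through the quotient $K_{ABS}$. The map is surjective because its image is compact and contains the dense set $\omega$.

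\emph{(i)$\Rightarrow$(ii)} is Proposition~\ref{prop:simple_jn_sq}, because $(\mu_n)$ is evidently simple. Here discreteness of the supports holds since $\omega$ is discrete.

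\emph{(ii)$\Rightarrow$(iii)} I expect to be the main obstacle. The plan is contraposition. If (iii) fails at $x$ with $U$, the argument in (iii)$\Leftrightarrow$(iv) yields infinitely many $n$ with, say, $a_n\to x$ along a net while $b_n$ stays outside a closed neighbourhood $W\subseteq U$ of $x$. Take $f$ with $f(x)=1$ and $f=0$ off $W$. Any $g$ in the dense set $\bigcup_m\bigcap_{n\ge m}\ker\mu_n$ satisfies $g(a_n)=g(b_n)$ for all large $n$. Pointwise closeness of $g$ to $f$ at the finitely many points $x$ and one $b_{n}$ must then be contradicted. Concretely, I consider the basic neighbourhood $V(f;x;\tfrac14)$ together with continuity of $g$ at $x$, which forces $g(a_n)>\tfrac34$ for a cofinal set of $n$, and hence $g(b_n)>\tfrac34$ for those $n$. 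Since $g$ is only controlled pointwise, I instead need to show the dense set lies in the closed set $\{g: g(x)=\lim_\lambda g(b_{n_\lambda})\text{ along a subnet}\}$. I would pass to a subnet along which $b_{n_\lambda}\to y\ne x$ and use the closed condition $g(x)=g(y)$. This condition holds for every $g$ with $g(a_n)=g(b_n)$ eventually, and it fails for $f$.
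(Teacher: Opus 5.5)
Your argument is correct, but it uses (iv) as a hub. The paper instead runs (vi)$\Rightarrow$(i)$\Rightarrow$(ii)$\Rightarrow$(iv), then (iv)$\Rightarrow$(i),(v),(vi) and (v)$\Rightarrow$(iii)$\Rightarrow$(ii).

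The genuinely different pieces are:
\begin{itemize}
\item A direct Urysohn-function proof of (i)$\Rightarrow$(iv). The paper gets from (i) to (iv) only through (ii), i.e.\ through Proposition~\ref{prop:simple_jn_sq} and hence the open-mapping argument of \cite{BKS19}.
\item A direct (iv)$\Rightarrow$(iii), obtained by deleting the asymmetric pairs $\{a_n,b_n\}$ from a neighbourhood. This is much shorter than the paper's (v)$\Rightarrow$(iii) via $V=W\cup h^{-1}[W]$.
\item (vi)$\Rightarrow$(iv) checked directly from the definition of $\aA_{ABS}$. The paper instead imports the weak* nullness of $\frac12(\delta_{2n}-\delta_{2n+1})$ on $K_{ABS}$ (Example~\ref{ex:ABS}, from \cite{MSZ24}).
\end{itemize}
Two of your steps match the paper in substance. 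Your (ii)$\Rightarrow$(iii), in its final form (the $\tau_p$-closed condition $g(x)=g(y)$ for a cluster point $y\neq x$ of the partner net), is the paper's (ii)$\Rightarrow$(iv) argument composed with the contrapositive of your (iv)$\Rightarrow$(iii). Your (v)$\Rightarrow$(vi) is the paper's gluing of $\Phi_0$ and $\Phi_1$, phrased via the universal property of the quotient map $\beta\omega\to K_{ABS}$.

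The trade-off is in how (ii) is reached. You obtain it only from (i) via Proposition~\ref{prop:simple_jn_sq}, so you rely on \cite{BKS19} at exactly that step. The paper also proves (iii)$\Rightarrow$(ii) by an explicit elementary construction (copy $g\restriction\overline{A}$ onto $B$), which exhibits the approximating functions. Combining your (i)$\Rightarrow$(iv) and (vi)$\Rightarrow$(iv) with the paper's (iii)$\Rightarrow$(ii) would make the whole theorem independent of both cited results.

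Two small presentational points:
\begin{itemize}
\item In (iv)$\Rightarrow$(iii), openness of $V=U'\setminus E$ needs every point of $V\setminus\omega$ to avoid $\overline{E}$, not just $x$. Your argument applies verbatim to any such point. Alternatively, take $U'\setminus\overline{E}$, which has the same trace on $\omega$, so that only $x\notin\overline{E}$ is needed.
\item In (ii)$\Rightarrow$(iii), drop the exploratory detour through $V(f;x;\tfrac14)$. Only the closed set $\{g\colon g(x)=g(y)\}$ does the work.
\end{itemize}
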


\begin{figure}$$\xymatrixcolsep{4pc}\xymatrix{
\\
\mathrm{(vii)}\ar@{=>}[r]&    \mathrm{(i)}\ar@{=>}[r]&    \mathrm{(ii)}\ar@{=>}[r]&   \mathrm{(iv)}\ar@{=>}[d]\ar@{=>}@/_3pc/@<-5pt>[lll]\ar@{=>}@/_2pc/@<-1pt>[ll]\\
&   &   \mathrm{(iii)}\ar@{=>}[u]&    \mathrm{(v)}\ar@{=>}[l]\\
}$$
\vspace{0mm}
\begin{center}\captionof{figure}[aaaaaa]{The implication scheme for the proof of Theorem \ref{theorem:SymmetricCase} \label{diagram}}
\end{center}\end{figure}

\begin{proof}
    For the scheme of the proof, see Figure \ref{diagram}.

    \medskip
    \ref{theorem:SymmetricCase:jnseq}$\Longrightarrow$\ref{theorem:SymmetricCase:sqseq}: The sequence $(\mu_n)$ is a simple JN-sequence, so by Proposition \ref{prop:simple_jn_sq} it is an (sq)-sequence.

    \medskip

    \ref{theorem:SymmetricCase:nhbd}$\Longrightarrow$\ref{theorem:SymmetricCase:sqseq}: From \ref{theorem:SymmetricCase:nhbd} it follows that for every $x\in b\omega\sm\omega$ we have $x\in\ol{A}\sm\omega$ if and only if $x\in\ol{B}\sm\omega$, and so that $\ol{A}\sm\omega=\ol{B}\sm\omega$. As $b\omega=\ol{A\cup B}=\ol{A}\cup\ol{B}$, we get
    \[b\omega\sm\omega=\ol{A}\sm\omega=\ol{B}\sm\omega.\]
    Obviously, we trivially have
    \[b\omega = A \sqcup B \sqcup (b \omega \setminus \omega).\]
    
    We will show that $(\mu_n)_{n\io}$ is an (sq)-sequence. Fix $g \in C_p(b \omega)$. Let $\eps > 0$ and let $x_1,\dots,x_k \in b \omega$ be distinct. We will find a function $f\in\bigcup_{m\io}\bigcap_{n\ge m}\ker(\mu_n)$ such that $|f(x_i)-g(x_i)|<\eps$ for every $i=1,\ldots,k$. Since the points of $\omega$ are isolated, without loss of generality we can assume that $x_1,\dots,x_k \in b \omega \setminus \omega=\ol{A}\sm\omega$.
    
    We define the function $f$ on $b \omega$ for every $x\in b\omega$ in the following way:
    \[f(x)=\begin{cases}
        g(x),&\text{ if }x\in\ol{A},\\
        g(a_n),&\text{ if }x=b_n\text{ for some }n\io.
    \end{cases}\]
    Then, for every $n\io$ we clearly have $f(a_n)=f(b_n)$, hence
    \[\mu_n(f) = (f(a_n) - f(b_n))/2 = 0\]
    and so $f\in\ker(\mu_n)$. Moreover, for $i=1,\dots,k$ we have $f(x_i) = g(x_i)$ and thus trivially $|f(x_i)-g(x_i)|=0<\eps$, so we are done if we show that $f$ is continuous. 
    
    As any function is continuous at isolated points, we just need to show that $f$ is continuous at any $y \in b \omega \setminus \omega$. Fix such $y$ and let $U'$ be an open neighbourhood of $f(y)$ in $\R$. As $f\rstr \overline{A} = g\rstr\ol{A}$ and $g$ is continuous, there is an open neighbourhood $W$ of $x$ in $\overline{A}$ such that $f[W]=g[W] \subseteq U'$. Let $U$ be an open set in $b \omega$ such that $W = U \cap \overline{A}$. By \ref{theorem:SymmetricCase:nhbd} there is an open subset $V$ of $U$ such that $y \in V$ and, for every $n \in \omega$, $a_n \in V$ if and only if $b_n \in V$. As $f(b_n) = f(a_n)$ for every $n \in \omega$, we get that $f[V \cap B] = f[V \cap A]$. Hence,
    \[f[V] = f\big[V\cap(\ol{A}\cup B)\big]=f[V \cap \ol{A}] \cup f[V \cap B] = f[V \cap \overline{A}] \subseteq f[U \cap \overline{A}] = f[W] \subseteq U',\]
    and thus $f$ is continuous at $y$. It follows that $(\mu_n)_{n\io}$ is an (sq)-sequence, as required.

    \medskip

    \ref{theorem:SymmetricCase:sqseq}$\Longrightarrow$\ref{theorem:SymmetricCase:net}: Suppose that \ref{theorem:SymmetricCase:net} is false. Then, without loss of generality, there are a net $(n_\lambda)_{\lambda\in\Lambda}$ in $\omega$ and a point $x \in b\omega\sm\omega$ such that
    \[a_{n_\lambda} \xrightarrow[\lambda]{\quad} x\quad\text{but}\quad b_{n_\lambda}\centernot{\xrightarrow[\lambda]{\quad}}x.\]
    Note that this implies that $(n_\lambda)_{\lambda \in \Lambda}$ does not admit a constant subnet. By the compactness of $b \omega$ we can pass to a subnet (which we do not relabel) such that there is $y\in b\omega\sm\omega$ with $y\neq x$ and $b_{n_\lambda} \xrightarrow[\lambda]{\quad} y$. Let $g \in C_p(b \omega)$ be any function such that $g(x) = 0$ and $g(y) = 1$. We will show that $g \notin \ol{\bigcup_{m \in \omega} \bigcap_{n \geq m} \ker (\mu_n)}^{\tau_p}$, contradicting \ref{theorem:SymmetricCase:sqseq}. Let
    \[f\in\bigcup_{m\io}\bigcap_{n\ge m}\ker(\mu_n)\]
    be a function such that $-1/3 < f(x) < 1/3$. It follows that $f(a_n) = f(b_n)$ for all but finitely many $n \in \omega$. As $a_{n_\lambda} \rightarrow x$ and $f$ is continuous, there is $\lambda_0\in\Lambda$ such that for all $\lambda \geq \lambda_0$ we have
    \[f(b_{n_\lambda}) = f(a_{n_\lambda}) < 1/3.\]
    But
    \[b_{n_\lambda}\xrightarrow[\lambda \geq \lambda_0]{\quad}y,\]
    so, again by the continuity of $f$, we get $-1/3\le f(y) \leq 1/3$. It follows that
    \[|g(y) - f(y)|=|1-f(y)| \geq 2/3 > 1/3.\]
    Consequently, $f$ does not belong to the open neighbourhood $V\big(g;x,y;1/3\big)$ of $g$. As $f$ was chosen arbitrarily, we see that 
    \[V(g;x,y;1/3)\cap\bigcup_{m\io}\bigcap_{n\ge m}\ker(\mu_n)=\emptyset,\]
    and so $(\mu_n)_{n\io}$ is not an (sq)-sequence, a contradiction.

    \medskip

    \ref{theorem:SymmetricCase:net}$\Longrightarrow$\ref{theorem:SymmetricCase:jnseq}: Assume that $(\mu_n)_{n\io}$ is not a JN-sequence, that is, there are $f\in C(b\omega)$, $\eps>0$, and a strictly increasing sequence $(n_k)_{k\io}$ such that 
    \[\tag{$*$}\mu_{n_k}(f)=\big(f\big(a_{n_k}\big)-f\big(b_{n_k}\big)\big)/2>\eps\]
    for every $k\io$. Pick
    \[x\in\ol{\big\{a_{n_k}\colon\ k\io\big\}}^{b\omega}\sm\omega.\]
    There is a net $(k_\lambda)_{\lambda\in\Lambda}$ in $\omega$ such that $a_{n_{k_\lambda}}\xrightarrow[\lambda]{\quad}x$. It follows by \ref{theorem:SymmetricCase:net} that also $b_{n_{k_\lambda}}\xrightarrow[\lambda]{\quad}x$. Consequently, by the continuity of $f$ we have
    \[\lim_{\lambda\in\Lambda}\Big(f\big(a_{n_{k_\lambda}}\big)-f\big(b_{n_{k_\lambda}}\big)\Big)=f(x)-f(x)=0,\]
    whereas by ($*$) we have
    \[\liminf_{\lambda\in\Lambda}\Big(f\big(a_{n_{k_\lambda}}\big)-f\big(b_{n_{k_\lambda}}\big)\Big)\ge\eps>0,\]
    a contradiction.
    
    \medskip
    %

    \ref{theorem:SymmetricCase:net}$\Longrightarrow$\ref{theorem:SymmetricCase:id}: Let $h\colon \overline{A} \rightarrow b \omega$ be defined by setting $h(a_n) = b_n$ for $n \in \omega$ and $h(x) = x$ for $x \in \overline{A} \setminus A$. By the compactness of $\ol{A}$, we only need to show that $h$ is a continuous injection onto $\ol{B}$. 
    
    For the injectivity of $h$, note that the mapping $h \restriction A$ is trivially injective and onto $B$, and that the mapping $h \restriction(\overline{A} \setminus A)$, being the identity map, is also injective and onto $\overline{A} \setminus A$. Since $(\ol{A}\sm A)\cap B=\emptyset$ 
    (as $B$ is disjoint from $A$ and consists only of isolated points), 
    we have $h[\ol{A}\sm A]\cap h[A]=\emptyset$. It follows that $h$ is injective.
    
    To show the continuity of $h$, let $(x_\lambda)_{\lambda\in\Lambda}$ be a net in $\overline{A}$ which converges to some $x \in \overline{A}$. As every map is continuous at isolated points, we can assume that $x \in \overline{A} \setminus A$, so $h(x)=x$. By passing to a subnet, we can assume that either $x_\lambda \in A$ for every $\lambda\in\Lambda$, or $x_\lambda \in \overline{A} \setminus A$ for every $\lambda\in\Lambda$. In the first case, we find $n_\lambda\in\omega$ for every $\lambda\in\Lambda$ such that $x_\lambda = a_{n_\lambda}$, and use \ref{theorem:SymmetricCase:net} to see that $h(x_\lambda) = b_{n_{\lambda}} \rightarrow x = h(x)$. The second case is also clear as $h \restriction \overline{A} \setminus A$ is the identity. Consequently, $h$ is continuous.
    
    By a similar argument using \ref{theorem:SymmetricCase:net}, we show that $h$ is onto $\overline{B}$.

    \medskip

    \ref{theorem:SymmetricCase:id}$\Longrightarrow$\ref{theorem:SymmetricCase:nhbd}: Let $h$ be as in \ref{theorem:SymmetricCase:id}. Note that
    \[\ol{A}\sm A=h[\ol{A}\sm A]=\ol{B}\sm B,\]
    and so that
    \[b\omega\sm\omega=\ol{A}\sm A=\ol{B}\sm B.\]
    Let $x \in b \omega \setminus \omega$ and let $U$ be an open neighbourhood of $x$ in $b\omega$. Then, the set
    \[W = U \cap \overline{B} \cap h[U \cap \overline{A}]\]
    is an open subset of $\overline{B}$. Set $V = W \cup h^{-1}[W]$. Then:
    \begin{enumerate}[(a)]
        \item $V\sub U$,
        \item $x \in V$,
        \item $V$ is open in $b \omega$,
        \item $V$ is as required in \ref{theorem:SymmetricCase:nhbd}. 
    \end{enumerate}
    Item (a) follows immediately from the definition of $W$ and the fact that $h$ is a bijection. Item (b) is also clear as $x \in W$. 
    
    We show (c). Note that, by the definition of $h$, we get $h^{-1} [W \setminus B] = W \setminus B$ and $h^{-1} [W \cap B] \cap \overline{B} = \emptyset$, hence
    \[V \cap \overline{B} = \big(W \cup h^{-1} [W \cap B] \cup h^{-1} [W \setminus B]\big) \cap \overline{B} =\]
    \[=\big(W \cap \overline{B}\big)\cup\big((W\sm B)\cap\ol{B}\big)=W\cap\ol{B}.\]
    Similarly, as $W\cap\overline{A}=U\cap(\overline{A}\sm A)$ and 
    \[h^{-1}[W]\cap\overline{A}=\big(U\cap(\overline{A}\sm A)\big)\cup\big(h^{-1}[U\cap B]\cap U\cap\ol{A}\big),\]
    we get that $W\cap\overline{A}\sub h^{-1}[W]\cap\ol{A}$, and so
    \[V\cap\overline{A}=(W\cup h^{-1}[W])\cap\overline{A}=\]
    \[=(W\cap\overline{A})\cup(h^{-1}[W]\cap\overline{A})=h^{-1}[W] \cap \overline{A}.\]
    Hence, $V \cap \overline{A}$ is open in $\ol{A}$ and $V \cap \overline{B}$ is open in $\overline{B}$. Let $y\in V$. Suppose, for a contradiction, that $y \in \overline{b \omega \setminus V}$. Then, as
    \[b\omega\sm V=(b\omega\sm V)\cap b\omega=(b\omega\sm V)\cap(\ol{A}\cup\ol{B})=\big((b\omega\sm V)\cap\ol{A}\big)\cup\big((b\omega\sm V)\cap\ol{B}\big),\]
    by the relative openness of $V$ in both $\ol{A}$ and $\ol{B}$, we have
    \[y \in \overline{(b \omega \setminus V) \cap \overline{A}}=(b \omega \setminus V) \cap \overline{A}\sub b\omega\sm V,\]
    or we have
    \[y \in \overline{(b \omega \setminus V) \cap \overline{B}}=(b \omega \setminus V) \cap \overline{B}\sub b\omega\sm V,\]
    hence $u\in b\omega\sm V$, which contradicts the fact that $y\in V$. Thus, (c) is proved.

    Finally, we prove (d). Let $n\io$ be such that $a_n\in V$. Then, $a_n\in W$ or $a_n\in h^{-1}[W]$. But if $a_n\in W$, then $a_n\in\ol{B}$, which is impossible as $a_n\in A$ and $A\cap\ol{B}=\emptyset$. Hence, $a_n\in h^{-1}[W]$, which implies that $b_n=h(a_n)\in W\sub V$, as required. On the other hand, let $n\io$ be such that $b_n\in V$. Then, $b_n\in W$ or $b_n\in h^{-1}[W]$. But if $b_n\in h^{-1}[W]$, then $b_n\in\ol{A}$, which is again impossible. So, $b_n\in W$ and hence $b_n\in U\cap\ol{B}$ and $b_n\in h[U\cap\ol{A}]$. The former implies that $a_n\in h^{-1}[U\cap\ol{B}]$, while the latter yields that $a_n\in U\cap\ol{A}$. Consequently, $a_n\in h^{-1}[W]\sub V$, again as required.

    \medskip

    \ref{theorem:SymmetricCase:kbs}$\Longrightarrow$\ref{theorem:SymmetricCase:jnseq}. Fix a continuous surjection $\varphi\colon K_{ABS}\to b\omega$ as in \ref{theorem:SymmetricCase:kbs}. Let $f\in C(b\omega)$. Note that $f\circ\varphi\in C(K_{ABS})$. Set $\mu_n^{ABS} = \frac{1}{2} \left(\delta_{2n} - \delta_{2n+1} \right) \in M_f(K_{ABS})$ for $n \in \omega$; then $(\mu_n^{ABS})_{n \in \omega}$ is weak* null by Example \ref{ex:ABS}.(2). We have
    \[\lim_{n\to\infty}\mu_n(f)=\lim_{n\to\infty}\big(f(a_n)-f(b_n)\big)/2=\]
    \[=\lim_{n\to\infty}\big(f(\varphi(2n))-f(\varphi(2n+1))\big)/2=\lim_{n\to\infty}\mu_n^{ABS}(f\circ\varphi)=0.\]
    Consequently, $(\mu_n)_{n\io}$ is a JN-sequence.

    \medskip

    \ref{theorem:SymmetricCase:net}$\Longrightarrow$\ref{theorem:SymmetricCase:kbs}. We construct a mapping $\varphi\colon K_{ABS}\to b\omega$ as follows. First, for each $n\io$ set $\varphi_0(2n)=a_n$ and $\varphi_1(2n+1)=b_n$, as required. Then, extend the (continuous) surjections
    \[\varphi_0\colon\EE\to A\quad\text{and}\quad\varphi_1\colon\OO\to B\]
    to the continuous surjections
    \[\Phi_0\colon\beta\EE\to\ol{A}\quad\text{and}\quad\Phi_1\colon\beta\OO\to\ol{B},\]
    respectively. 
    
    For each $x\in K_{ABS}\sm\omega$ we have
    \[\tag{$**$}\Phi_0(x)=\Phi_1(x).\]
    To see this, fix $x\in K_{ABS}\sm\omega$ and let $(n_\lambda)_{\lambda\in\Lambda}$ be a net in $\omega$ such that $\lim_{\lambda\in\Lambda}2n_\lambda=x$ in $K_{ABS}$. Note that also $\lim_{\lambda\in\Lambda}(2n_\lambda+1)=x$ in $K_{ABS}$. Then, by the continuity of $\Phi_0$ and $\Phi_1$, we have
    \[\lim_{\lambda\in\Lambda}a_{{n_\lambda}}=\lim_{\lambda\in\Lambda}\Phi_0(2n_\lambda)=\Phi_0(x)\]
    and
    \[\lim_{\lambda\in\Lambda}b_{{n_\lambda}}=\lim_{\lambda\in\Lambda}\Phi_1(2n_\lambda+1)=\Phi_1(x).\]
    Since by \ref{theorem:SymmetricCase:net} we have $\lim_{\lambda\in\Lambda}a_{{n_\lambda}}=\lim_{\lambda\in\Lambda}b_{{n_\lambda}}$, we get that $\Phi_0(x)=\Phi_1(x)$.

    For each $x\in K_{ABS}$ set
    \[\varphi(x)=\begin{cases}
        \Phi_0(x),&\text{ if }x\in\EE,\\
        \Phi_1(x),&\text{ if }x\in K_{ABS}\sm\EE.
    \end{cases}\]
    Then, $\varphi\colon K_{ABS}\to b\omega$ is a well-defined surjection. We claim that $\varphi$ is continuous---it is enough to check this at points of $K_{ABS}\sm\omega$. So, fix $x\in K_{ABS}\sm\omega$ and let $(x_\lambda)_{\lambda\in\Lambda}$ be any net in $K_{ABS}$ convergent to $x$. Let $\Lambda_0=\{\lambda\in\Lambda\colon x_\lambda\in\EE\}$ and $\Lambda_1=\Lambda\sm\Lambda_0$. For $i\in\{0,1\}$, if $\Lambda_i$ is cofinal in $\Lambda$ and so $\lim_{\lambda\in\Lambda_i}x_\lambda=x$, then by the continuity of $\Phi_i$ and ($*$) we have
    \[\lim_{\lambda\in\Lambda_i}\varphi(x_\lambda)=\lim_{\lambda\in\Lambda_i}\Phi_i(x_\lambda)=\Phi_i(x)=\varphi(x).\]
    It follows that
    \[\lim_{\lambda\in\Lambda}\varphi(x_\lambda)=\varphi(x),\]
    and, consequently, $\varphi$ is continuous at $x$.
\end{proof}


The following corollary justifies the statements that the space $K_{ABS}$ is a maximal symmetric compactification of $\omega$ whereas the interval $[0,\omega]$ is a minimal one.

\begin{corollary}\label{cor:symm_max_min}
    Let $b\omega$ be a symmetric compactification of $\omega$ with respect to some partition $(A,B)=\big(\{a_n\}_{n\io},\{b_n\}_{n\io}\big)$ of $\omega$ and some homeomorphism $h\colon\ol{A}\to\ol{B}$ such that $h(a_n)=b_n$ for all $n\io$. Then, there are continuous surjections
    \[\varphi\colon K_{ABS}\to b\omega\quad\text{and}\quad\psi\colon b\omega\to[0,\omega]\]
    such that
    \[\varphi(2n)=a_n=\psi(a_n)\quad\text{and}\quad\varphi(2n+1)=b_n=\psi(b_n)\]
    for all $n\io$.
\end{corollary}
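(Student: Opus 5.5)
The existence of $\varphi$ follows directly from Theorem \ref{theorem:SymmetricCase}. The hypothesis says that $b\omega$ is symmetric with respect to $(A,B)$ via a homeomorphism $h$ with $h(a_n)=b_n$ and $h\rstr(\ol{A}\sm A)=\id$. This is exactly condition \ref{theorem:SymmetricCase:id} for the enumerations $a_n$, $b_n$. Since all conditions of Theorem \ref{theorem:SymmetricCase} are equivalent, condition \ref{theorem:SymmetricCase:kbs} holds, and it provides a continuous surjection $\varphi\colon K_{ABS}\to b\omega$ with $\varphi(2n)=a_n$ and $\varphi(2n+1)=b_n$.

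For $\psi$, I read the required identities $\psi(a_n)=a_n$ and $\psi(b_n)=b_n$ as saying that $\psi$ fixes every integer, using the identification $\omega\sub[0,\omega]$. So I define $\psi(k)=k$ for $k\io$ and $\psi(x)=\omega$ for every $x\in b\omega\sm\omega$. This map is clearly onto $[0,\omega]$: the remainder $b\omega\sm\omega$ is nonempty because $\omega$ is infinite and $b\omega$ is compact, so the point $\omega$ is hit.

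It remains to check continuity. Points of $\omega$ are isolated in $b\omega$, so continuity there is automatic. At a point $x\in b\omega\sm\omega$, take a basic neighbourhood $[N,\omega]$ of $\psi(x)=\omega$. Its preimage is $b\omega\sm\{0,\dots,N-1\}$. This set is open in $b\omega$, because a finite set of isolated points is closed in the Hausdorff space $b\omega$. Hence $\psi$ is continuous.

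Both parts are essentially immediate once Theorem \ref{theorem:SymmetricCase} is available. The one point that needs care is noting that condition \ref{theorem:SymmetricCase:id} matches the hypotheses of the corollary verbatim, including the way the homeomorphism respects the chosen enumerations.
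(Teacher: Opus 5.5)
Your proposal is correct and follows the same route as the paper. You obtain $\varphi$ from the implication \ref{theorem:SymmetricCase:id}$\Rightarrow$\ref{theorem:SymmetricCase:kbs} of Theorem \ref{theorem:SymmetricCase}, and you define $\psi$ as the identity on $\omega$ sending the whole remainder to the point $\omega$, which is exactly the paper's argument. Your explicit checks of the continuity and surjectivity of $\psi$ just fill in what the paper calls immediate.
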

\begin{proof}
    The existence of the mapping $\varphi$ follows from Theorem \ref{theorem:SymmetricCase}. The mapping $\psi\colon b\omega\to[0,\omega]$ is defined by $\psi(n)=n$ for each $n\io$, and $\psi(x)=\omega$ for all $x\in b\omega\sm\omega$; it is immediate that $\psi$ is continuous.
\end{proof}

Recall that a topological space $X$ is \textit{zero-dimensional} if it the Boolean algebra $Clopen(X)$ is a basis of the topology of $X$. The following corollary translates equivalence \ref{theorem:SymmetricCase:id}$\Leftrightarrow$\ref{theorem:SymmetricCase:kbs} of Theorem \ref{theorem:SymmetricCase} into the Boolean-theoretic language; its proof is a routine application of the Stone duality (see \cite{Kop89}). The further two corollaries are its immediate consequences.

\begin{corollary}
    Let $b\omega$ be a zero-dimensional compactification of $\omega$ and set $\aA=Clopen(b\omega)$. Let $(A,B)=\big(\{a_n\}_{n\io},\{b_n\}_{n\io}\big)$ be a partition of $\omega$. Then, the following are equivalent:
    \begin{enumerate}[(i)]
        \item $b\omega$ is symmetric with respect to $(A,B)$,
        \item there is a Boolean monomorphism $\Phi\colon\aA\to\aA_{ABS}$ such that $\Phi(\{a_n\})=\{2n\}$ and $\Phi(\{b_n\})=\{2n+1\}$ for every $n\io$.
    \end{enumerate}
\end{corollary}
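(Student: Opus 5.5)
The plan is to obtain the corollary from equivalence (v)$\Leftrightarrow$(vi) of Theorem \ref{theorem:SymmetricCase} together with Stone duality for zero-dimensional compact spaces. Recall that $K_{ABS}$ is the Stone space of $\aA_{ABS}$, and that $b\omega$, being zero-dimensional, is the Stone space of $\aA=Clopen(b\omega)$. Under this duality, continuous surjections $St(\mathcal{B})\to St(\aA)$ correspond exactly to Boolean monomorphisms $\aA\to\mathcal{B}$. The correspondence is $\Phi(U)=\varphi^{-1}[U]$, and surjectivity of $\varphi$ is equivalent to injectivity of $\Phi$. Throughout, we identify the isolated point $n\in\omega\sub K_{ABS}$ with the ultrafilter of sets in $\aA_{ABS}$ containing $n$, and similarly $\{a_n\}\in\aA$ with the singleton clopen set in $b\omega$.

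For (i)$\Rightarrow$(ii), we first check that the homeomorphism in Definition \ref{def:symmetric} may be taken to satisfy $h(a_n)=b_n$ after re-enumerating. Here there is a subtlety: the enumeration of $B$ is fixed in the statement, so we need (v) of Theorem \ref{theorem:SymmetricCase} for that specific enumeration. I would read (i) as ``symmetric with respect to $(A,B)$ via some $h$ with $h(a_n)=b_n$''. This is the natural reading given Corollary \ref{cor:symm_max_min}, and I would say so explicitly, since otherwise the statement is false for arbitrary enumerations. Theorem \ref{theorem:SymmetricCase} then gives a continuous surjection $\varphi\colon K_{ABS}\to b\omega$ with $\varphi(2n)=a_n$ and $\varphi(2n+1)=b_n$. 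Setting $\Phi(U)=\varphi^{-1}[U]$ for $U\in\aA$ yields a Boolean homomorphism into $Clopen(K_{ABS})\cong\aA_{ABS}$, which is injective because $\varphi$ is onto. Moreover $\varphi^{-1}[\{a_n\}]=\{2n\}$: indeed $\varphi(2n)=a_n$, and no remainder point maps to the isolated point $a_n$, since $\varphi$ sends $K_{ABS}\sm\omega$, which is the closure of $\omega$ minus $\omega$, into $\ol{\omega}\sm\omega$. To justify this last point, I would note that $\varphi[K_{ABS}\sm\omega]$ is compact and disjoint from $\omega$. This holds because any $x\notin\omega$ is a limit of a net of distinct integers, whose images are distinct isolated points that cannot converge to an isolated point.

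For (ii)$\Rightarrow$(i), the dual map $\varphi=St(\Phi)\colon K_{ABS}\to b\omega$, sending an ultrafilter $p$ to $\Phi^{-1}[p]$, is a continuous surjection because $\Phi$ is injective. The conditions $\Phi(\{a_n\})=\{2n\}$ and $\Phi(\{b_n\})=\{2n+1\}$ force $\varphi(2n)=a_n$ and $\varphi(2n+1)=b_n$. Then (vi)$\Rightarrow$(v) of Theorem \ref{theorem:SymmetricCase} yields symmetry with respect to $(A,B)$.

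I expect the main obstacle to be only the bookkeeping that fixed points of the form $\{2n\}$ correspond to the preimages of singletons, i.e.\ that the remainder is mapped into the remainder, together with clarifying the enumeration issue. Everything else is a routine application of Stone duality.
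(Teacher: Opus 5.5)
Your proposal is correct and follows exactly the route the paper indicates: the corollary is presented there as a routine Stone-duality translation of (v)$\Leftrightarrow$(vi) of Theorem~\ref{theorem:SymmetricCase}, with monomorphisms $\aA\to\aA_{ABS}$ dual to continuous surjections $K_{ABS}\to b\omega$. Your reading of (i) as symmetry via an $h$ with $h(a_n)=b_n$ is the intended one; with a mismatched enumeration of $B$, (i)$\Rightarrow$(ii) can indeed fail, e.g.\ already for $K_{ABS}$ itself. Your check that $\varphi^{-1}[\{a_n\}]=\{2n\}$ supplies the one detail the paper leaves implicit.
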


\begin{corollary}\label{cor:algebra_chain}
    If $b\omega$ is a zero-dimensional symmetric compactification of $\omega$, then the Boolean algebra $Clopen(b\omega)$ is the union of a countable increasing chain of proper subalgebras.
\end{corollary}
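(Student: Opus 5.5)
The plan is to reduce the statement to the corresponding property of $\aA_{ABS}$ from Example \ref{ex:ABS}.(5) and transfer it through the Boolean monomorphism supplied by the preceding corollary. Let $b\omega$ be zero-dimensional and symmetric with respect to a partition $(A,B)=\big(\{a_n\}_{n\io},\{b_n\}_{n\io}\big)$, and put $\aA=Clopen(b\omega)$. The preceding corollary gives a Boolean monomorphism $\Phi\colon\aA\to\aA_{ABS}$ with $\Phi(\{a_n\})=\{2n\}$ and $\Phi(\{b_n\})=\{2n+1\}$. I would avoid relying on the unstated chain from \cite{Sch82} and instead write down an explicit chain directly in $\aA$, mimicking the natural chain in $\aA_{ABS}$.

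For each $k\io$, let $\aA_k$ consist of all $C\in\aA$ such that for every $n\ge k$ we have $a_n\in C\iff b_n\in C$. Each $\aA_k$ is closed under complements and finite unions, hence is a subalgebra, and clearly $\aA_k\sub\aA_{k+1}$. Moreover, $\bigcup_k\aA_k=\aA$. To see this, take $C\in\aA$. Then $\Phi(C)\in\aA_{ABS}$, so $2n\in\Phi(C)\iff 2n+1\in\Phi(C)$ for all but finitely many $n$.

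It remains to check that $2n\in\Phi(C)\iff a_n\in C$, and similarly for $b_n$. Since $\{a_n\}$ is clopen and $\Phi$ is a homomorphism, we have $\Phi(C\cap\{a_n\})=\Phi(C)\cap\{2n\}$. The left-hand side is $\Phi(\{a_n\})=\{2n\}$ if $a_n\in C$, and $\Phi(\emptyset)=\emptyset$ otherwise, which gives the claim. Alternatively, I could bypass $\Phi$ entirely by using that $(\mu_n)$ is a JN-sequence (Theorem \ref{theorem:SymmetricCase}) and applying it to $\chi_C\in C(b\omega)$: $\mu_n(\chi_C)\in\{0,\pm1/2\}$ tends to $0$, so it vanishes eventually. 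This second route is cleaner and is the one I would write.

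The main obstacle is properness: each $\aA_k$ must be a proper subalgebra. For this, the set $\{a_k\}$ is clopen in $b\omega$ because $a_k$ is isolated, so $\{a_k\}\in\aA$; and $a_k\in\{a_k\}$ while $b_k\notin\{a_k\}$, so $\{a_k\}\notin\aA_k$. (One might also want the chain to be strictly increasing, which holds since $\{a_k\}\in\aA_{k+1}\setminus\aA_k$.) Thus $(\aA_k)_{k\io}$ is a countable increasing chain of proper subalgebras with union $\aA$, as required.
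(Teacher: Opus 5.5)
Your argument is correct. It is essentially the paper's idea made explicit, with one genuine variation.

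The paper gives no separate proof. It calls the statement an immediate consequence of the preceding corollary: embed $Clopen(b\omega)$ into $\aA_{ABS}$ via $\Phi$ and use Example~\ref{ex:ABS}.(5). Your $\aA_k$ is exactly $\Phi^{-1}[\mathcal{B}_k]$ for the natural chain $\mathcal{B}_k=\{C\sub\omega\colon 2n\in C\iff 2n+1\in C \text{ for all } n\ge k\}$ of $\aA_{ABS}$, by your computation $2n\in\Phi(C)\iff a_n\in C$.

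Writing the chain down lets you verify properness directly via $\{a_k\}\notin\aA_k$. The paper's one-liner leaves this implicit: pulling back a chain along a non-surjective monomorphism yields increasing subalgebras exhausting $Clopen(b\omega)$, but properness is not automatic and depends on the chain. Your preferred JN route also dispenses with the Stone-duality corollary and with Schachermayer's example. It uses only (v)$\Rightarrow$(i) of Theorem~\ref{theorem:SymmetricCase} and the fact that $\mu_n(\chi_C)\in\{0,\pm\frac{1}{2}\}$. The paper's route, in exchange, shows the result as a formal shadow of $Clopen(b\omega)$ sitting inside $\aA_{ABS}$.

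One detail you should state explicitly: the enumeration must satisfy $b_n=h(a_n)$ for the witnessing homeomorphism $h$. You can always arrange this, because $h$ maps the isolated points of $\ol{A}$ (namely $A$) onto those of $\ol{B}$ (namely $B$). With a mismatched enumeration the union claim can fail. Take $K_{ABS}$ with $a_n=2n$ and $b_n=2\sigma(n)+1$, where $\sigma$ swaps $2j$ and $2j+1$. Then the clopen set $\bigcup_{m\in\EE}\{2m,2m+1\}$ lies in no $\aA_k$.
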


\begin{remark}
    It follows from Corollary \ref{cor:algebra_chain} and \cite[Proposition 4.6]{Sch82} that for a zero-dimensional symmetric compactification $b\omega$ the Boolean algebra $Clopen(b\omega)$ does not have the so-called Nikodym property.
\end{remark}

\begin{corollary}
    If $\aA$ is a Boolean subalgebra of $\aA_{ABS}$ such that $\{n\}\in\aA$ for every $n\io$, then the Stone space $St(\aA)$ of $\aA$ is a symmetric compactification of $\omega$.
\end{corollary}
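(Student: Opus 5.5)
The statement to prove is the last corollary: if $\aA\sub\aA_{ABS}$ is a Boolean subalgebra containing all singletons $\{n\}$, $n\io$, then $St(\aA)$ is a symmetric compactification of $\omega$. The plan is to realize $St(\aA)$ as a continuous image of $K_{ABS}$ and then apply implication \ref{theorem:SymmetricCase:kbs}$\Rightarrow$\ref{theorem:SymmetricCase:id} of Theorem \ref{theorem:SymmetricCase}, with $a_n=2n$ and $b_n=2n+1$.

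\emph{Step 1: $St(\aA)$ is a compactification of $\omega$.} Since $\{n\}\in\aA$ for every $n\io$, each principal ultrafilter $p_n=\{C\in\aA\colon n\in C\}$ is an isolated point of $St(\aA)$, because $\{p_n\}$ is the clopen set corresponding to $\{n\}$. The map $n\mapsto p_n$ embeds $\omega$ as a discrete subspace of $St(\aA)$. This image is dense: a basic clopen set of $St(\aA)$ corresponds to a non-empty $C\in\aA\sub\wo$, and for any $n\in C$ we have $p_n\in C$. Hence $St(\aA)$ is a (zero-dimensional) compactification of $\omega$, and we identify $p_n$ with $n$.

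\emph{Step 2: the surjection $\varphi$.} By Stone duality, the inclusion monomorphism $\aA\hookrightarrow\aA_{ABS}$ induces a continuous surjection $\varphi\colon St(\aA_{ABS})=K_{ABS}\to St(\aA)$, given by $\varphi(u)=u\cap\aA$. A principal ultrafilter at $n$ in $\aA_{ABS}$ restricts to the principal ultrafilter at $n$ in $\aA$, so $\varphi(n)=n$. In particular $\varphi(2n)=a_n$ and $\varphi(2n+1)=b_n$, where $A=\EE$ and $B=\OO$ form a partition of $\omega$.

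\emph{Step 3: conclusion.} Condition \ref{theorem:SymmetricCase:kbs} of Theorem \ref{theorem:SymmetricCase} now holds for $b\omega=St(\aA)$ and the partition $(\EE,\OO)$. That theorem gives \ref{theorem:SymmetricCase:id}, so $St(\aA)$ is symmetric. Equivalently, one can invoke the preceding Boolean corollary with $\Phi$ taken to be the inclusion.

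The only point needing care is Step 1, namely checking that the points of $\omega$ are isolated and dense in $St(\aA)$. Both facts follow directly from $\{n\}\in\aA$ and the fact that every non-zero element of $\aA$ is a non-empty subset of $\omega$.
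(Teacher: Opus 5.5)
Your proposal is correct and follows essentially the paper's route. The paper obtains this corollary as an immediate consequence of the preceding Boolean corollary, which is the Stone-duality form of (v)$\Leftrightarrow$(vi) in Theorem~\ref{theorem:SymmetricCase}, applied with $\Phi$ the inclusion $\aA\hookrightarrow\aA_{ABS}$; your Steps 2--3 unpack exactly this, and your Step 1 checks that $St(\aA)$ is a compactification of $\omega$, which the paper leaves implicit.
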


%
%

\subsection{JN-sequences and (sq)-sequences with bounded sizes of supports}

We are now going to use Theorem \ref{theorem:SymmetricCase} to characterize the existence of JN-sequences and (sq)-sequences with uniformly bounded sizes of supports. First, we make a simple observation that the only weak* cluster point of an (sq)-sequence in $M_f(K)$ can be the zero measure.

\begin{lemma}\label{lemma:cluster_point}
    Let $K$ be a compact space. Let $(\mu_n)_{n\io}$ be a norm-bounded sequence in $M(K)$ and set $\xX = \bigcup_{m \in \omega} \bigcap_{n \geq m}\ker(\mu_n)$. Let $\mu\in M(K)$ be a weak* cluster point of the set $\{\mu_n\colon n\io\}$. Then:
    \begin{enumerate}[(a)]
        \item $\xX\sub\ker(\mu)$,
        \item if $(\mu_n)_{n\io}$ is an (sq)-sequence and $\mu \in M_f(K)$, then $\mu = 0$.
    \end{enumerate}
\end{lemma}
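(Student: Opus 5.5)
For (a), fix $f\in\xX$, so there is $m\io$ with $\mu_n(f)=0$ for all $n\ge m$. Since $\mu$ is a weak* cluster point of $\{\mu_n\colon n\io\}$, I would first argue that $\mu$ is a weak* cluster point of the tail $\{\mu_n\colon n\ge m\}$. There are two cases.

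\begin{itemize}
\item If $\mu$ is a cluster point in the sense that every weak* neighbourhood of $\mu$ contains $\mu_n$ for infinitely many $n$, this is immediate.
\item If one only knows $\mu\in\ol{\{\mu_n\colon n\io\}}^{w^*}$ with $\mu\ne\mu_n$ for all $n$, note that the finitely many $\mu_0,\ldots,\mu_{m-1}$ can be separated from $\mu$ by a weak* open set, because the weak* topology is Hausdorff. Then $\mu$ lies in the weak* closure of the tail.
\end{itemize}

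Norm-boundedness is not really needed here. Only the weak* topology induced by $C(K)$ is used, and $f\in C(K)$ is a legitimate test function.

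Now the evaluation map $\nu\mapsto\nu(f)$ is weak* continuous, and it vanishes on $\{\mu_n\colon n\ge m\}$. Hence it vanishes on the weak* closure of that set, and so $\mu(f)=0$. Since $f\in\xX$ was arbitrary, this gives $\xX\sub\ker(\mu)$.

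For (b), assume $(\mu_n)_{n\io}$ is an (sq)-sequence and $\mu\in M_f(K)$. Then $\mu$ defines a $\tau_p$-continuous linear functional on $C_p(K)$, namely $f\mapsto\sum_i\alpha_i f(x_i)$. Its kernel $\ker(\mu)$ is therefore $\tau_p$-closed in $C_p(K)$. By (a), $\xX\sub\ker(\mu)$, and by the (sq)-property $\ol{\xX}^{\tau_p}=C_p(K)$. Hence $\ker(\mu)=C(K)$, i.e. $\mu(f)=0$ for all $f\in C(K)$. Finally, $\mu=0$ as a measure, either by the Riesz representation or directly: use Urysohn functions separating the finitely many support points to see that each $\alpha_i=0$.

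The only delicate point is the meaning of ``cluster point'' in (a), that is, ensuring that $\mu$ is approximated by the tails $\{\mu_n\colon n\ge m\}$ and not just by finitely many $\mu_n$. I would handle this as above. I would also include the remark that if $\mu=\mu_k$ for some $k$ and $\mu$ is isolated in the closure, then the statement should be read with the infinitely-many-indices convention, which is the natural one for a sequence. The fact that $\mu$ is finitely supported is exactly what makes $\ker(\mu)$ closed in the pointwise topology. This explains why (b) needs that hypothesis.
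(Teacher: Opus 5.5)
Your proof is correct and follows the same route as the paper. Part (a) uses that $\mu_n(f)=0$ on a tail together with weak* continuity of $\nu\mapsto\nu(f)$, and part (b) uses that a finitely supported $\mu$ is $\tau_p$-continuous, so $\ker(\mu)$ is a $\tau_p$-closed set containing the $\tau_p$-dense set $\xX$. Your extra care about what ``cluster point'' means (approximation by tails, not by a single $\mu_k$) and your remark that norm-boundedness is never used are accurate refinements of details the paper leaves implicit.
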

\begin{proof}
    For every $f \in\xX$ we have $\mu_n(f) = 0$ for all but finitely many $n \io$, hence $\mu(f) = 0$ and so (a) follows. To see (b), note that if $\mu \in M_f(K)$, then it is $\tau_p$-continuous, and hence, by (a) and the definition of (sq)-sequences, we have
    \[\mu=\mu\rstr C_p(K)=\mu \rstr \overline{\xX}^{\tau_p}=0.\]
\end{proof}

Recall that for any compact space $K$ and $k \in \omega$, the set $M^1_k(K)$ is a weak* compact subset of $M_f(K)$ (as it is the image of the compact space $K^k \times B_{\ell_1(k)}$ under the continuous map $(x_1,\dots,x_k,a_1,\dots,a_k) \mapsto \sum_{i=1}^k a_i \delta_{x_i}$). We have already seen in Proposition \ref{prop:simple_jn_sq} that every simple JN-sequence is an (sq)-sequence. In what follows we show that having uniformly bounded sizes of supports is a sufficient condition for an (sq)-sequence to be a JN-sequence.

\begin{proposition}\label{prop:sqk_jnk}
    Let $K$ be a compact space. Let $(\mu_n)_{n\io}$ be a normalized (sq)-sequence in $M_k(K)$ for some $k \in \N$. Then $(\mu_n)_{n\io}$ is a JN-sequence.
\end{proposition}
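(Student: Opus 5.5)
We argue by contradiction, using the weak* compactness of $M^1_k(K)$ together with Lemma \ref{lemma:cluster_point}(b). Suppose $(\mu_n)_{n\io}$ is not weak* null. Then there are $f\in C_p(K)$, $\eps>0$ and an infinite set $N\sub\omega$ such that $|\mu_n(f)|\ge\eps$ for every $n\in N$.

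Every $\mu_n$ lies in $M^1_k(K)$, which is weak* compact, as recalled just before the statement. Hence the set $\{\mu_n\colon n\in N\}$ has a weak* cluster point $\mu\in M^1_k(K)$. Here we mean a cluster point of the sequence $(\mu_n)_{n\in N}$: every neighbourhood of $\mu$ contains $\mu_n$ for infinitely many $n\in N$. Such a point exists by compactness, as a limit of a convergent subnet. Since $\mu$ is finitely supported, it is the kind of cluster point to which Lemma \ref{lemma:cluster_point}(b) applies.

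One point needs care: the lemma speaks of a weak* cluster point of the whole set $\{\mu_n\colon n\io\}$, and its proof of (a) uses only that $\mu(f)=\lim_\lambda\mu_{n_\lambda}(f)$ along a net with $n_\lambda\to\infty$. Our $\mu$ is the limit of a subnet $(\mu_{n_\lambda})$ with $n_\lambda\in N$ and $n_\lambda\to\infty$, because $N$ is infinite and we may take the subnet cofinal. For any $g\in\xX$ we have $\mu_n(g)=0$ for all but finitely many $n$, so $\mu(g)=0$. Thus $\xX\sub\ker\mu$. Since $\mu\in M_f(K)$ is $\tau_p$-continuous and $\xX$ is $\tau_p$-dense by the (sq)-property, we get $\mu=0$.

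On the other hand, the map $\nu\mapsto\nu(f)$ is weak* continuous and $|\mu_n(f)|\ge\eps$ along the subnet, so $|\mu(f)|\ge\eps>0$. This contradicts $\mu=0$. Therefore $(\mu_n)_{n\io}$ is weak* null, and being normalized it is a JN-sequence.

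The only delicate step is making sure the cluster point is taken along indices tending to infinity, which the infinitude of $N$ guarantees. Weak* compactness of $M^1_k(K)$ is exactly where the bound $k$ on the support sizes is used: it keeps the cluster point finitely supported, so that $\tau_p$-density of $\xX$ forces it to vanish.
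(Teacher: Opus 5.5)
Your proof is correct and follows essentially the paper's route. Weak* compactness of $M^1_k(K)$ keeps cluster points finitely supported, and Lemma~\ref{lemma:cluster_point}(b) then forces every such cluster point to be $0$. The paper simply invokes the fact that a sequence in a compact space with a unique cluster point converges, whereas your contradiction argument along the infinite set $N$ unpacks that step, with the lemma's argument re-run along indices tending to infinity.
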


\begin{proof}
    As $M^1_k(K)$ is a compact subset of $M_f(K)$, we get that all the weak* cluster points of $\{\mu_n\colon n\io\}$ are in $M_f(K)$. Hence, by Lemma \ref{lemma:cluster_point}.(b), $\{\mu_n\colon n\io\}$ has the unique weak* cluster point $0$. As every sequence in a compact space with a unique cluster point is necessarily a convergent sequence, the claim follows.
\end{proof}

For the proof of the next theorem, we will need the following important result from \cite{MSZ24} that allows us to reduce a JN-sequence with uniformly bounded sizes of supports to a JN-sequence with size of support being 2.

\begin{theorem}[{\cite[Theorem 1.4]{MSZ24}}]\label{theorem:JNseq_size2}
    Let $X$ be a Tychonoff space. If $X$ carries a JN-sequence from $M_k(X)$ for some $k\io$, then there is a disjointly supported JN-sequence on $X$ from $M_2(X)$.
\end{theorem}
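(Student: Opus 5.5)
The plan is to extract, from a given normalized JN-sequence $(\mu_n)_{n\io}$ in $M_k(X)$, a sequence of ``two-point pieces'' $\frac12(\delta_{x_n}-\delta_{y_n})$ which is still weak* null, and then to thin it out to a disjointly supported one. First I would normalize the data. Write $\mu_n=\sum_{i<k}\alpha^i_n\delta_{x^i_n}$, padding with zero coefficients if necessary, and pass to a subsequence so that each $\alpha^i_n\to\alpha^i$ and the sign pattern of the coefficients is constant. Testing against the constant function $1$ gives $\mu_n(X)\to0$. Hence $\|\mu_n^+\|\to\frac12$ and $\|\mu_n^-\|\to\frac12$, so $\sum_{\alpha^i>0}\alpha^i=\frac12=-\sum_{\alpha^i<0}\alpha^i$.

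Next I would use a transport-plan decomposition. A measure with total mass $0$ and norm $1$ is a convex combination of at most $2k$ measures of the form $\frac12(\delta_x-\delta_y)$, with $x\in\supp^+$ and $y\in\supp^-$. Applying this to a slight correction of $\mu_n$ (the correction has norm tending to $0$) and passing to a further subsequence gives
\[\mu_n=\sum_{(i,j)\in P}\lambda^{ij}_n\,\tfrac12\bigl(\delta_{x^i_n}-\delta_{x^j_n}\bigr)+\rho_n,\qquad \|\rho_n\|\to0,\]
where the finite index set $P$ of pairs (positive index $i$, negative index $j$) and the limits $\lambda^{ij}_n\to\lambda^{ij}>0$ do not depend on $n$. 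The goal then becomes showing that for some pair $(i,j)\in P$ and some subsequence, $\nu_n=\frac12(\delta_{x^i_n}-\delta_{x^j_n})$ is weak* null.

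The main obstacle is exactly this step: the pieces of a weak* null convex combination need not be null individually, because cancellations between different pairs may occur. I would proceed by induction on $k$. Look at the limit behaviour of the points $x^i_n$ in $\beta X$ (using bounded functions first). Group indices $i,i'$ according to whether the sequences $(x^i_n)$ and $(x^{i'}_n)$ ``merge'', meaning that for every $f\in C_p(X)$ we have $f(x^i_n)-f(x^{i'}_n)\to0$ along a suitable subsequence; a diagonal argument is needed to make this choice independent of $f$. If some positive index and some negative index merge, the corresponding $\nu_n$ is a JN-sequence and we are done. If no such pair exists, I would aim to show that there is a single function $f$ separating the merged groups, namely constant near one group and vanishing near the others. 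Such an $f$ would give $\mu_n(f)\not\to0$, because within a group containing only positive or only negative indices the mass does not cancel. Building this separating $f$ uniformly in $n$ is the delicate part. I expect it to require discarding indices that do not separate and invoking the induction hypothesis on the restriction of $\mu_n$ to the remaining indices, which has fewer than $k$ support points.

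Finally, I would make the resulting sequence disjointly supported. If some point appears in the supports of infinitely many $\nu_n$, then the partner points converge to it in the weak sense above. In that case I either pass to a subsequence with pairwise distinct points or exhibit a nontrivial convergent sequence, which easily yields a disjointly supported two-point JN-sequence, by pairing consecutive terms of the sequence. Otherwise a greedy subsequence selection removes all overlaps. In both cases renormalizing preserves weak* nullness, and this gives the required disjointly supported JN-sequence in $M_2(X)$.
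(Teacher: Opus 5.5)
The paper does not prove this statement; it quotes it from \cite{MSZ24}. So I judge your proposal on its own terms. Your normalisation and your final disjointification are fine. The gap is the central step: finding one positive index $i$, one negative index $j$ and one infinite $S\sub\omega$ such that $\frac12(\delta_{x^i_n}-\delta_{x^j_n})_{n\in S}$ is weak* null. You leave this as ``the delicate part'', and the mechanism you sketch does not work.
\begin{itemize}
\item ``Merging along a suitable subsequence'' depends on $f$. A diagonal argument handles only countably many $f$, but weak* nullness must hold for all of $C_p(X)$, which no countable family of functions controls.
\item The induction on $k$ has nothing to apply to. The restriction of $\mu_n$ to a subset of indices is in general not weak* null, so it is not a JN-sequence with fewer support points.
\item Restricting to pairs in a fixed transport plan $P$ is misleading. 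Take $\frac14(\delta_{a_n}-\delta_{b_n}+\delta_{c_n}-\delta_{d_n})$ with the pairs $(a_n,b_n)$ and $(c_n,d_n)$ merging. The plan through $(a,d)$ and $(c,b)$ contains no good pair.
\end{itemize}
The missing idea is ultrafilter limits. For $\mathcal{U}\in\omega^*$ put $p^i_{\mathcal U}=\lim_{\mathcal U}x^i_n\in\beta X$. Testing $\mu_n$ on $C_b(X)=C(\beta X)$ gives $\sum_i\alpha^i\delta_{p^i_{\mathcal U}}=0$; indices with $\alpha^i=0$ are negligible on bounded functions. Hence every group of indices with a common limit has coefficient sum zero, so some positive $i$ and negative $j$ satisfy $p^i_{\mathcal U}=p^j_{\mathcal U}$. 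The sets $E_{ij}=\{\mathcal U\colon p^i_{\mathcal U}=p^j_{\mathcal U}\}$ are closed in $\omega^*$, and finitely many of them cover $\omega^*$. So one of them contains some $S^*=\ol{S}^{\beta\omega}\sm\omega$, and then $\frac12(\delta_{x^i_n}-\delta_{x^j_n})_{n\in S}$ is weak* null against every bounded continuous function. Equivalently, you can thin out successively over the finitely many pairs. This replaces both your induction and the search for a uniform separating $f$.

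A second, smaller gap is that compactness only controls bounded functions, and you never return from $C_b(X)$ to $C(X)$. For non-pseudocompact $X$ this is a genuine step. It does hold for two-point pieces. Suppose $|f(x_n)-f(y_n)|\ge\eps$ on an infinite set $T$.
\begin{itemize}
\item If $f(x_n)$ or $f(y_n)$ is bounded on an infinite subset of $T$, the truncation $\max(\min(f,M),-M)$ is a bounded witness.
\item Otherwise $|f(x_n)|,|f(y_n)|\to\infty$ on $T$. After thinning, $\{f(x_n)\}$ and $\{f(y_n)\}$ are disjoint closed discrete subsets of $\R$. Then $\psi\circ f$, with $\psi\in C(\R,[0,1])$ separating them, is a bounded witness.
\end{itemize}
This transfer fails for general measures in $M_k(X)$, because coefficients tending to $0$ can multiply values tending to $\infty$. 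That is why you must pass to the two-point piece before leaving $\beta X$. With these two ingredients, your last paragraph completes the proof: a repeated point forces a non-trivial convergent sequence, and otherwise greedy selection gives disjoint supports.
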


Putting everything together, we get the main result of the paper.


\begin{theorem}\label{theorem:sq_jn_2_k}
    Let $K$ be a compact space. The following are equivalent:
    \begin{enumerate}
        \item\label{theorem:sq_jn_2_k:sq2} There is an (sq)-sequence in $M_2(K)$.
        \item\label{theorem:sq_jn_2_k:sqk} There is an (sq)-sequence in $M_k(K)$ for some $k\io$.
        \item\label{theorem:sq_jn_2_k:jn2} There is a JN-sequence in $M_2(K)$.
        \item\label{theorem:sq_jn_2_k:jnk} There is a JN-sequence in $M_k(K)$ for some $k\io$.
        \item\label{theorem:sq_jn_2_k:psc} There exists a copy of a symmetric compactification of $\omega$ in $K$.
    \end{enumerate}
\end{theorem}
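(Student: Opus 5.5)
Our plan is to close the cycle of implications (1)$\Rightarrow$(2)$\Rightarrow$(4)$\Rightarrow$(3)$\Rightarrow$(5)$\Rightarrow$(1). Most links follow from results proved above. The only genuinely new work is extracting a symmetric compactification from a JN-sequence in $M_2(K)$.

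The trivial and near-trivial links come first. Implication (1)$\Rightarrow$(2) is immediate with $k=2$. For (2)$\Rightarrow$(4), we normalize the (sq)-sequence if needed: rescaling by nonzero constants does not change the kernels. Proposition \ref{prop:sqk_jnk} then says it is a JN-sequence in $M_k(K)$. For (4)$\Rightarrow$(3) we invoke Theorem \ref{theorem:JNseq_size2}, which yields a disjointly supported JN-sequence in $M_2(K)$.

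For (5)$\Rightarrow$(1), let $L\sub K$ be a copy of a symmetric compactification $b\omega$, with partition $(A,B)=(\{a_n\},\{b_n\})$ and $h(a_n)=b_n$. By Theorem \ref{theorem:SymmetricCase}, (v)$\Rightarrow$(i), the measures $\mu_n=\frac12(\delta_{a_n}-\delta_{b_n})$ form a JN-sequence on $L$. Via the adjoint of the restriction map $C_p(K)\to C_p(L)$, as in the proof of Proposition \ref{prop:zero}, they form a JN-sequence on $K$. This sequence is disjointly supported. It is also discretely supported, since $A\cup B$ consists of isolated points of $L$, which are isolated in $A\cup B$ in $K$. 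Each $\mu_n$ has mass $\pm\frac12$ on its positive and negative parts. Hence it is a simple JN-sequence, and Proposition \ref{prop:simple_jn_sq} shows it is an (sq)-sequence in $M_2(K)$.

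The main step is (3)$\Rightarrow$(5). We start from a disjointly supported JN-sequence $(\mu_n)$ in $M_2(K)$, so $|\supp(\mu_n)|\le 2$ and $\|\mu_n\|=1$.

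First we rule out supports of size one. If infinitely many $\mu_n$ were $\pm\delta_{x_n}$, testing against the constant function $1$ would give $|\mu_n(1)|=1$, contradicting weak* nullness. So, passing to a tail, $\mu_n=\alpha_n\delta_{x_n}+\beta_n\delta_{y_n}$ with $|\alpha_n|+|\beta_n|=1$.

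Testing against $1$ gives $\alpha_n+\beta_n\to0$. Hence $\alpha_n,-\beta_n\to\pm\frac12$, and after relabelling $x_n,y_n$ we get $\alpha_n\to\frac12$ and $\beta_n\to-\frac12$. For every $f\in C(K)$ we have $\alpha_n f(x_n)+\beta_n f(y_n)\to0$, and since $f$ is bounded this gives $f(x_n)-f(y_n)\to0$. So $\nu_n=\frac12(\delta_{x_n}-\delta_{y_n})$ is also a JN-sequence.

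Next we need the set $D=\{x_n,y_n\colon n\io\}$ to be discrete, with each point isolated in $\ol{D}$. The points are pairwise distinct by disjointness of supports. We would first try Theorem \ref{theorem:JNseq_simple}: rerun the reduction so that we obtain a sequence that is simultaneously simple and of support size $2$. Alternatively, in compact $K$ we can inductively pass to a subsequence whose union of supports is discrete, choosing separating neighbourhoods one pair at a time. This is possible because an infinite set in a Tychonoff space always contains an infinite discrete subset, but the pairing has to be preserved, and making that bookkeeping precise is where we expect the real difficulty.

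Once $D$ is discrete, set $b\omega=\ol{D}^K$, with $a_n=x_n$, $b_n=y_n$, $A=\{a_n\}$, $B=\{b_n\}$. Because $D$ is discrete and $\ol{D}$ is compact, $\ol{D}$ is a compactification of $D\cong\omega$, and $(A,B)$ is a partition. The sequence $\nu_n$ is weak* null on $\ol{D}$, since every continuous function on $\ol{D}$ extends to $K$ by Tietze. Theorem \ref{theorem:SymmetricCase}, (i)$\Rightarrow$(v), then shows that $\ol{D}$ is symmetric, which gives (5).
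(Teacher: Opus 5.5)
Your plan follows the paper's proof. You use the same citations for the easy links, and for (3)$\Rightarrow$(5) the same reduction: pass to a simple, discretely supported JN-sequence in $M_2(K)$, then apply Theorem \ref{theorem:SymmetricCase}, (i)$\Rightarrow$(v).

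Your (5)$\Rightarrow$(1) is slightly more careful than the paper's one-line appeal to (v)$\Rightarrow$(ii), which by itself only yields an (sq)-sequence on the copy of $b\omega$. Pushing the JN-sequence forward to $K$ and checking simplicity there avoids the extension argument that transfer would otherwise need.

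A small point: as a link in your cycle, (3)$\Rightarrow$(5) must start from an arbitrary JN-sequence in $M_2(K)$. So first apply Theorem \ref{theorem:JNseq_size2} with $k=2$ to obtain disjoint supports.

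The genuine gap is the discreteness step, which you leave open.

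\begin{itemize}
\item Your first option does not work. Theorem \ref{theorem:JNseq_simple}, as stated, gives no control on the size of supports, so it cannot be ``rerun'' to produce a simple sequence in $M_2(K)$.
\item Your second option is the right one; the paper also only says ``by going to a subsequence''. But the fact you quote, that infinite sets contain infinite discrete subsets, does not by itself preserve the pairing.
\end{itemize}

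Here is one way to close the gap. Let $p$ be a cluster point of the sequence $(x_n)$, i.e.\ every neighbourhood of $p$ contains $x_n$ for infinitely many $n$.

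First, for every open $W\ni p$ there are infinitely many $n$ with $x_n,y_n\in W$. Take $f\in C(K)$ with $f(p)=1$ and $f\rstr(K\sm W)=0$. Then $f(x_n)>1/2$ for infinitely many $n$. Since $f(x_n)-f(y_n)\to0$, all but finitely many of these $n$ satisfy $f(y_n)>0$, so $y_n\in W$.

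Next, build the subsequence recursively, starting with $W_0=K$. Given an open $W_k\ni p$:
\begin{itemize}
\item Choose $n_k$ above all previously chosen indices with $x_{n_k},y_{n_k}\in W_k\sm\{p\}$. This is possible because $p$ equals at most one $x_n$ and at most one $y_n$.
\item Pick pairwise disjoint open sets $U_k\ni x_{n_k}$, $V_k\ni y_{n_k}$ and $W_{k+1}\ni p$, all contained in $W_k$.
\end{itemize}
For $j<k$ we have $U_k\cup V_k\sub W_{j+1}$, and $W_{j+1}$ is disjoint from $U_j\cup V_j$. Hence all the sets $U_j,V_j$ are pairwise disjoint, and $\bigcup_k\{x_{n_k},y_{n_k}\}$ is discrete.

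Then $(\nu_{n_k})_{k\io}$ is a simple JN-sequence, and the rest of your argument goes through. Alternatively, without using weak* nullness, you can take a cluster point $(p,q)$ of $((x_n,y_n))_{n\io}$ in $K\times K$ and shrink neighbourhoods of $\{p,q\}$ instead.
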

\begin{proof}
    Implications (\ref{theorem:sq_jn_2_k:sq2})$\Rightarrow$(\ref{theorem:sq_jn_2_k:sqk}) and (\ref{theorem:sq_jn_2_k:jn2})$\Rightarrow$(\ref{theorem:sq_jn_2_k:jnk}) are obvious. Implication (\ref{theorem:sq_jn_2_k:sqk})$\Rightarrow$(\ref{theorem:sq_jn_2_k:jnk}) follows from Proposition \ref{prop:sqk_jnk}, implication (\ref{theorem:sq_jn_2_k:jnk})$\Rightarrow$(\ref{theorem:sq_jn_2_k:jn2}) from Theorem \ref{theorem:JNseq_size2}, and implication (\ref{theorem:sq_jn_2_k:psc})$\Rightarrow$(\ref{theorem:sq_jn_2_k:sq2}) from Theorem \ref{theorem:SymmetricCase} (implication \ref{theorem:SymmetricCase:id}$\Rightarrow$\ref{theorem:SymmetricCase:sqseq}).

    For implication (\ref{theorem:sq_jn_2_k:jn2})$\Rightarrow$(\ref{theorem:sq_jn_2_k:psc}), note that by \cite[Lemma 6.6]{MSZ24} there is a disjointly supported JN-sequence $(\mu_n)_{n\io}$ in $M_2(K)$. By going to a subsequence, we may assume that $(\mu_n)_{n\io}$ is discretely supported and even simple, hence the implication follows again by Theorem \ref{theorem:SymmetricCase} (implication \ref{theorem:SymmetricCase:jnseq}$\Rightarrow$\ref{theorem:SymmetricCase:id}). 
\end{proof}

\section{Totally asymmetric compactifications of $\omega$} \label{sec:Asym}

We now focus on totally asymmetric compactifications of $\omega$. We will show that such compactifications are closely related to the Separable Quotient Problem for $C_p(X)$-spaces in the sense that if there exists a compact space $K$ such that $C_p(K)$ does not admit a separable quotient, then there exist such $K$ which is additionally a totally asymmetric compactification of $\omega$.


As was mentioned before, the presence of a convergent sequence in a Tychonoff space $X$ is sufficient for $C_p(X)$ to have a separable quotient. K\k{a}kol and \'{S}liwa \cite{KS18} showed that the presence of a copy of $\beta \omega$ in $X$ also suffices (see also \cite{BKS19}). This was achieved by showing that $\beta \omega$ has the following property, later named 2DCP in \cite{KKS26}, which implies that the space $C_p(X)$ has a separable quotient. 

\begin{definition}\label{def:2dcp}
    A Tychonoff space $X$ has \textit{the Two-Disjoint-Copies Property}, in short \textit{2DCP}, if there exists a sequence  $(K_n)_{n \in \omega}$ of non-empty compact subsets of $X$ such that for every $n \in \omega$ the set $K_n$ contains two disjoint subsets homeomorphic to $K_{n+1}$.
\end{definition}

It is immediate that both the Cantor space $2^\omega$ and $\beta \omega$ have the 2DCP, but $[0,\omega]$ does not. For a systematic study of 2DCP, see \cite{KKS26}. 

In the following result we give a condition on a compactification of $\omega$ which is somewhat orthogonal to being totally asymmetric and which is sufficient for the compact space to have 2DCP.

\begin{proposition} \label{prop:KakSli2018}
    Let $K$ be a compact space satisfying the following property: There exists a discrete set $M_0\in\ctblsub{K}$ such that for every $M \in [M_0]^\omega$ there are $A,B \in [M]^\omega$ such that $\overline{A} \cap \overline{B} = \emptyset$ and $\overline{A}$ and $\overline{B}$ are homeomorphic. Then, $K$ has 2DCP and consequently $C_p(K)$ has a separable quotient.
\end{proposition}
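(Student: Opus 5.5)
We will build the sequence $(K_n)_{n\io}$ required by the 2DCP recursively, taking each $K_n$ to be the closure of an infinite subset of $M_0$. The last clause then follows from the result of K\k{a}kol and \'{S}liwa \cite{KS18} that 2DCP implies that $C_p(K)$ has a separable quotient.

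Put $K_0=\overline{M_0}$. Suppose we already have $M_n\in[M_0]^\omega$ with $K_n=\overline{M_n}$. Apply the hypothesis to $M_n$ and obtain $A_n,B_n\in[M_n]^\omega$ such that $\overline{A_n}\cap\overline{B_n}=\emptyset$ and there is a homeomorphism $g_n\colon\overline{A_n}\to\overline{B_n}$. Set $M_{n+1}=A_n$ and $K_{n+1}=\overline{A_n}$. Then $K_n$ contains the two disjoint sets $\overline{A_n}$ and $\overline{B_n}$. The first is literally $K_{n+1}$, and the second is homeomorphic to $K_{n+1}$ via $g_n$. Since $A_n\sub M_n\sub M_0$, the induction continues. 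Each $K_n$ is non-empty and compact, being a closed subset of the compact space $K$.

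The hypothesis requires $A_n,B_n\in[M_n]^\omega$ with $M_n\in[M_0]^\omega$. Our construction keeps every $M_n$ an infinite subset of $M_0$, so the hypothesis can be applied at every stage. Discreteness of $M_0$ is not needed for this recursion; it only guarantees that the closures are compactifications of countable discrete sets.

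There is no real obstacle. The only point needing care is that the 2DCP asks for two disjoint subsets of $K_n$ homeomorphic to $K_{n+1}$. Choosing $K_{n+1}$ to be one of the two closures, rather than some third set, makes this immediate.
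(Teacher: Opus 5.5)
Your proof is correct and is the same inductive construction the paper sketches: apply the hypothesis to $M_n$, take $K_{n+1}=\overline{A_n}$ so that $K_n\supseteq\overline{A_n}\cup\overline{B_n}$ supplies the two disjoint copies, and continue inside $A_n$. The separable quotient then follows from 2DCP via \cite[Theorem 4]{KS18}, exactly as in the paper.
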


\begin{proof}
    Use the assumption inductively to construct a sequence $(K_n)_{n\io}$ as in the definition of 2DCP. That 2DCP of $K$ implies that $C_p(K)$ has a separable quotient follows from \cite[Theorem 4]{KS18}.
\end{proof}

Now, we are all prepared to reduce the Separable Quotient Problem for spaces $C_p(K)$, $K$ compact, to the case when $K$ is a totally asymmetric compactification of $\omega$, that is, to prove Theorem \ref{theorem:mainD} from Introduction.

\begin{remark}
    Note that the definition of totally asymmetric compactifications of $\omega$ can be easily transferred any countable discrete sets $M$.
\end{remark}

\begin{theorem} \label{theorem:totasym}
    Suppose that $K$ is a compact space such that $C_p(K)$ does not have a separable quotient. Then, there is a discrete set $M\in\ctblsub{K}$ such that $\overline{M}$ is a totally asymmetric compactification of $M$ and $C_p(\overline{M})$ does not have a separable quotient.
\end{theorem}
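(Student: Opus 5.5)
The plan is to combine three ingredients: heredity of the failure of separable quotients to closed subspaces, the contrapositive of Proposition \ref{prop:KakSli2018}, and Theorem \ref{theorem:sq_jn_2_k}, which says that a copy of a symmetric compactification inside $K$ yields a separable quotient.

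\emph{Step 1 (heredity).} For every closed $F\sub K$, the restriction map $C_p(K)\to C_p(F)$ is a continuous linear surjection, by the Tietze extension theorem. It is also open, since $F$ is compact. Hence a separable quotient of $C_p(F)$ would give one of $C_p(K)$, so no closed subspace of $K$ has one. In particular:
\begin{itemize}
\item $K$ contains no symmetric compactification of a countable discrete set (Theorem \ref{theorem:sq_jn_2_k});
\item $K$ contains no non-trivial convergent sequence;
\item $K$ has no closed subspace satisfying the hypothesis of Proposition \ref{prop:KakSli2018}.
\end{itemize}
Since $K$ is infinite, it contains a countable discrete set $M_0$.

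\emph{Step 2 (disjoint closures).} Apply Proposition \ref{prop:KakSli2018} to the compact space $K$ in contrapositive form, with the discrete set $M_0$. We obtain $M\in[M_0]^\omega$ such that no two $A,B\in[M]^\omega$ have disjoint, mutually homeomorphic closures. This property passes to infinite subsets of $M$. By Step 1, $C_p(\ol{M})$ has no separable quotient. It remains to exclude pairs of disjoint $A,B\in[M]^\omega$ whose closures are homeomorphic but intersect.

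\emph{Step 3 (intersecting closures).} Suppose $h\colon\ol{A}\to\ol{B}$ is a homeomorphism with $A,B\in[M]^\omega$ disjoint. Since $A$ and $B$ are exactly the isolated points of $\ol{A}$ and $\ol{B}$, $h$ restricts to a bijection $A\to B$. Write $A=\{a_n\}$ and $b_n=h(a_n)$. For every infinite $A'\sub A$, $h$ restricts to a homeomorphism $\ol{A'}\to\ol{h[A']}$. By Step 2, $\ol{A'}\cap\ol{h[A']}\ne\emptyset$ for every such $A'$. We aim to find $A'=\{a_n\colon n\in N\}$, with $N\io$ infinite, satisfying the following condition: for every $x\in\ol{A'}\sm A'$ we have $h(x)=x$, and the nets $a_{n_\lambda}$ and $b_{n_\lambda}$ converge to the same points. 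If $h$ fixes $\ol{A'}\sm A'$ pointwise and $\ol{A'}\sm A'=\ol{h[A']}\sm h[A']$, then condition \ref{theorem:SymmetricCase:id} of Theorem \ref{theorem:SymmetricCase} holds for $\ol{A'\cup h[A']}$. So $K$ would contain a symmetric compactification, contradicting Step 1. The dichotomy we want is therefore the following. Either some infinite $A'$ has closure inside the region where $h$ agrees with the identity, and the closures of $A'$ and $h[A']$ have equal remainders; or we can shrink to $A'$ with $\ol{A'}\cap h[\ol{A'}]=\emptyset$, contradicting Step 2.

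\emph{Main obstacle.} The obstacle is this dichotomy. One needs a Katětov/van Douwen-type lemma: a continuous map without fixed points on a compactification of a countable discrete set admits an infinite set $A'$ with $\ol{A'}$ disjoint from its image. To get it, I would first use that $K$ has no convergent sequences and no JN-sequences with support size $2$ on $\ol{M}$ (Theorem \ref{theorem:sq_jn_2_k}). Then I would apply a three-colouring argument to $n\mapsto(a_n,b_n)$. I expect to separate points of $\ol{A}\sm A$ not fixed by $h$ with continuous functions, and to use a diagonal (Ramsey-style) choice of $N$, instead of clopen sets, since $K$ need not be zero-dimensional. If the fixed set of $h$ meets the closure of every infinite $A'$ but never contains it, the same refinement should produce a subsequence on which $\mu_n=\tfrac12(\delta_{a_n}-\delta_{b_n})$ fails to be weak* null only along fixed-point-free parts. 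Iterating then reaches one of the two contradictory alternatives.
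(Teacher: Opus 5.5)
\textbf{Verdict: genuine gap in Step 3, though an easy one to close.} Steps 1 and 2 are correct. Step 1 even supplies the heredity argument showing that $C_p(\overline{M})$ has no separable quotient, which the paper leaves implicit. Extracting $M$ first from the contrapositive of Proposition \ref{prop:KakSli2018} and then proving $\overline{M}$ totally asymmetric is a legitimate reordering of the paper's proof by contradiction. The gap is that your ``main obstacle'' is never resolved. The three-colouring and Ramsey-style refinement, the appeal to JN-sequences ``only along fixed-point-free parts'', and the final ``iterating'' are only announced (``I expect'', ``should produce''). As written, the proposal does not prove that $\overline{M}$ is totally asymmetric.

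The obstacle is also illusory. You do not need a global Kat\v{e}tov-type partition, only one point. Let $h\colon\overline{A}\to\overline{B}$ be a homeomorphism with $A,B\in[M]^\omega$ disjoint. There are two cases.
\begin{enumerate}
\item If $h\restriction(\overline{A}\setminus A)$ is the identity, then $\overline{A\cup B}$ is a symmetric compactification of $A\cup B$ inside $K$. This contradicts Theorem \ref{theorem:sq_jn_2_k}.
\item Otherwise pick $x\in\overline{A}\setminus A$ with $h(x)\neq x$. Take open sets $U_0\ni x$ and $V_0\ni h(x)$ in $K$ with $\overline{U_0}\cap\overline{V_0}=\emptyset$. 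Put $U=\overline{A}\cap U_0\cap h^{-1}[V_0]$ and $A'=A\cap U$. Then $A'$ is infinite, because $U$ is a neighbourhood of the non-isolated point $x\in\overline{A}\setminus A$. Also $\overline{A'}\subseteq\overline{U_0}$ and $\overline{h[A']}=h[\overline{A'}]\subseteq\overline{V_0}$. So $A'$ and $h[A']$ are infinite subsets of $M$ whose closures are disjoint and homeomorphic via $h$, contradicting Step 2.
\end{enumerate}
In particular, the case you worry about cannot occur: the fixed set of $h$ cannot meet every $\overline{A'}$ without containing $\overline{A}\setminus A$. This one-point separation is exactly the paper's argument, and inserting it in place of your obstacle paragraph completes your proof.
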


\begin{proof}    
    Let $M_0$ be any countable infinite discrete subset of $K$. Note that $K$ cannot contain a symmetric compactification of integers, as this would imply that it carries an (sq)-sequence (by Theorem \ref{theorem:sq_jn_2_k}), and so that
    $C_p(K)$ has a separable quotient. Hence, using the characterization from Theorem \ref{theorem:SymmetricCase}.\ref{theorem:SymmetricCase:id}, we have:
    \begin{itemize}
        \item[($*$)] for every disjoint $A,B \in [M_0]^\omega$ and every homeomorphism $h\colon\overline{A} \rightarrow \overline{B}$ there is $x \in \overline{A} \setminus A$ such that $h(x) \neq x$. 
    \end{itemize}
    We claim that there is $M \in [M_0]^\omega$ such that $\overline{M}$ is a totally asymmetric compactification of $M$. Suppose, for a contradiction, that this is not the case. Then:
    \begin{itemize}
        \item[($**$)] for every $M \in [M_0]^\omega$ there are disjoint subsets $A,B \in [M]^\omega$ such that $\overline{A}$ and $\overline{B}$ are homeomorphic. 
    \end{itemize}
    
    We will show that this implies that the condition from Proposition \ref{prop:KakSli2018} is satisfied.
    Let $M \in [M_0]^\omega$. We use $(**)$ to find disjoint $A_0, B_0 \in [M]^\omega$ such that $\overline{A_0}$ and $\overline{B_0}$ are homeomorphic. Let $h\colon\overline{A_0} \rightarrow \overline{B_0}$ be a homeomorphism. By $(*)$ there is $x \in \overline{A_0} \setminus A_0$ such that $h(x) \neq x$. We find open neighbourhoods $U_0$ of $x$ in $\overline{A_0}$ and $V_0$ of $h(x)$  in $\overline{B_0}$ such that $\overline{U_0} \cap \overline{V_0} = \emptyset$. The sets $U = U_0 \cap h^{-1}[V_0]$ and $V = V_0 \cap h[U_0]$ are again neighbourhoods of $x$ in $\overline{A_0}$ and $h(x)$ in $\overline{B_0}$, respectively, such that $h[\overline{U}] =\overline{V}$ and $\overline{U} \cap \overline{V} = \emptyset$. Set $A = A_0 \cap U$ and $B = B_0 \cap V$. Then $A,B$ are infinite by the density of $M$ in $\overline{M}$ and $h[A] = B$. It follows that $h \restriction \overline{A}$ is a homeomorphism of $\overline{A}$ onto $\overline{B}$. Further, since $\overline{A} \subseteq \overline{U}$ and $\overline{B} \subseteq \overline{V}$, we have $\overline{A} \cap \overline{B} = \emptyset$.
    
    Consequently, we have proved the following: For every $M \in [M_0]^\omega$ there are infinite disjoint subsets $A,B \in [M]^\omega$ such that $\overline{A}$ and $\overline{B}$ are homeomorphic and $\overline{A} \cap \overline{B} = \emptyset$.    
    But then Proposition \ref{prop:KakSli2018} implies that $C_p(K)$ has a separable quotient, a contradiction. Hence, $(**)$ cannot be true, and so there exists $M \in [M_0]^\omega$ such that $ \overline{M}$ is a totally asymmetric compactification of $M$, as required.
\end{proof}

\begin{remark}
    Note that in the above proof we have found the set $M$ inside an \textit{arbitrary} countable infinite discrete set $M_0$. This means that if a space $C_p(K)$ does not admit a separable quotient, then $K$ is saturated with totally asymmetric compactifications $bM$ such that $C_p(bM)$ does not admit a separable quotient, that is, every infinite closed subset of $K$ contains such a compactification $bM$.
\end{remark}

%

\section*{AI statement}

No artificial intelligence tools have been used at any stage of conducting the presented research or during the preparation and edition of the paper.

\end{document}